\newtheorem{thm}{Theorem}[section]
\newtheorem{lem}[thm]{Lemma}
\newtheorem{prop}[thm]{Proposition}
\newtheorem{question}[thm]{Question}
\theoremstyle{remark}
\newtheorem{rem}[thm]{Remark}
\theoremstyle{definition}
\newtheorem{defn}[thm]{Definition}
\newcommand\bR{{\mathbb{R}}}
\newcommand\Hom{{\rm Hom}}
\newcommand\ra{\rightarrow}
\newcommand\emp{\emptyset}
\newcommand\SL{{\mathsf{SL}}}
\newcommand\PSL{{\mathsf{PSL}}}
\newcommand\SO{{\mathsf{SO}}}
\newcommand{\Ad}{\mathrm{Ad}}
\newcommand{\Hit}{\mathrm{Hit}}
\newcommand{\vol}{\mathrm{vol}}
\newcommand{\PSO}{\mathsf{PSO}}
\newcommand{\Sp}{\mathsf{Sp}}
\newcommand{\PSp}{\mathsf{PSp}}
\newcommand{\Mod}{\operatorname{Mod}}
\long\def\deletefornow#1{}
\newcommand{\Max}{\operatorname{Max}}
\newcommand{\Span}{\operatorname{Span}}
\begin{document}
\title[Hitchin-Moduli spaces]{
The volumes of the Hitchin-Riemann moduli spaces are infinite} 

\author{Suhyoung Choi}
\author{Hongtaek Jung} 
\address{Department of Mathematics \\ KAIST \\Daejeon 305-701, South Korea}
\email{schoi@math.kaist.ac.kr}

\address{Department of Mathematical Sciences\\ Seoul National University\\ Seoul} 
\email{htjung@snu.ac.kr}

\date{\today}

\subjclass[2020]{Primary 53D30; Secondary 57K20}

\keywords{Hitchin components, Atiyah-Bott-Goldman symplectic form}

\begin{abstract}
In this study, we prove that the actions of the mapping class groups on a large range of higher Teichm\"uller spaces with a rank of at least two possess infinite Atiyah-Bott-Goldman covolume. This result encompasses $\mathsf{G}$-Hitchin components of a higher rank split real form $\mathsf{G}$ and each component of the space of $\Sp_{2n}(\bR)$-maximal representations where $n \geq 2$. To achieve this outcome, we employ Goldman flows to find an infinite series of subsets of identical volume, the images of which in the quotient space are all mutually disjoint.

\end{abstract}
\maketitle

\section{Introduction} 

Let $S$ be a closed, orientable surface with a genus $g \ge 2$, and $\mathsf{G}$ a split real form of a simple Lie group of adjoint type. The $\mathsf{G}$-Hitchin-Teichm\"uller components, commonly referred to as Hitchin components, $\Hit_{\mathsf{G}}(S)$, of $S$ have been the subject of extensive research for the past thirty years. Presently, understanding regarding their topological types \cite{hitchin92}, symplectic structures \cite{Goldman84,SWZ2020}, and metric structures \cite{BCLS2015} is relatively comprehensive.

The mapping class group $\Mod(S)$ of $S$ acts properly on the Teichm\"uller space $\mathcal{T}(S)$, resulting in the Riemann moduli space $\mathcal{M}(S) = \mathcal{T}(S)/\Mod(S)$. In a similar fashion, Labourie \cite{Lab08} proved that the action of $\Mod(S)$ on Hitchin components $\Hit_{\mathsf{G}}(S)$ is also proper. From this, we can define the \emph{$\mathsf{G}$-Hitchin-Riemann moduli spaces} as $\mathcal{M}_{\mathsf{G}}(S) := \Hit_{\mathsf{G}}(S)/\Mod(S)$. Contrary to the Hitchin components themselves, the study of the Hitchin-Riemann moduli spaces appears to be in its initial stages. The principal aim of this paper is to reveal certain geometric properties of these Hitchin-Riemann moduli spaces.

The classical Riemann moduli spaces accommodate several supplementary structures. One such structure is the Weil-Petersson symplectic form. There is a direct equivalent of this in the form of the Atiyah-Bott-Goldman symplectic form $\omega_{ABG}$ on $\Hit_\mathsf{G}(S)$. Given that the $\Mod(S)$-action on $\Hit_\mathsf{G}(S)$ preserves $\omega_{ABG}$ \cite{Goldman84}, it allows us to consider the Atiyah-Bott-Goldman volume form denoted as $\vol:=\wedge^{\frac{1}{2}\dim \Hit_\mathsf{G}(S)}\omega_{ABG}$ on both $\Hit_\mathsf{G}(S)$ and $\mathcal{M}_\mathsf{G}(S)$. We shall now proceed to state our principal theorem on $\mathsf{G}$-Hitchin-Riemann moduli spaces for split real forms $\mathsf{G}$.

\begin{thm}[Main Theorem]\label{thm:main} 
Let $S$ be a closed orientable surface of genus $>1$, and let $\mathsf{G}$ be a split real form of a complex simple adjoint group. The total volume of the $\mathsf{G}$-Hitchin-Riemann moduli space $\mathcal{M}_{\mathsf{G}}(S)$ with respect to the volume form $\vol$ is infinite provided the real rank of $\mathsf{G}$ is at least two. 
\end{thm}

Now, $\mathsf{G}$-Hitchin components are well-known examples of \emph{higher Teichm\"uller spaces}. Another category of higher Teichm\"uller spaces originates from \emph{maximal representations}. Take a Lie group $\mathsf{H}$ of Hermitian type. A representation $\pi_1(S)\to \mathsf{H}$ that possesses the maximum feasible Toledo invariant can be viewed as a higher rank counterpart of Fuchsian representations. The collection of conjugacy classes of $\mathsf{H}$-maximal representations, denoted by 
\[ \operatorname{Max}_\mathsf{H}(S)=\{\rho\in \Hom(\pi_1(S),\mathsf{H})\,|\,\rho 
\text{ is maximal}\}/\mathsf{H}, \] 
comprises a union of connected components \cite{burger2010}.

Although $\Max_{\mathsf{H}}(S)$ may not be a smooth manifold, the smooth locus of $\operatorname{Max}_\mathsf{H}(S)$ supports the Atiyah-Bott-Goldman symplectic form. Moreover, $\Mod(S)$ acts properly on $\operatorname{Max}_\mathsf{H}(S)$, as shown in  \cite[Corollary~3.2]{wienhard06}. Therefore, we may consider the Atiyah-Bott-Goldman volume of the moduli space $\operatorname{Max}_\mathsf{H}(S)/\Mod(S)$. 

We will only focus on the case when $\mathsf{H}=\Sp_{2n}(\bR)$. According to \cite{guichard2010}, the moduli space $\operatorname{Max}_{\Sp_{2n}(\bR)}(S)/\Mod(S)$ consists of 6 connected components when $n>2$ and $2g+2$ components when $n=2$. Among them, 2 connected components are $\Sp_{2n}(\bR)$ lifts of the $\PSp_{2n}(\bR)$-Hitchin component. Thus, we already know that at least 2 connected components of $\operatorname{Max}_{\Sp_{2n}(\bR)}(S)/\Mod(S)$ have infinite volume. More generally, we exploit our method of proving Theorem~\ref{thm:main} to deduce the following: 
\begin{thm}\label{thm:maximal}
    Let $S$ be a closed orientable surface of genus $g\ge2$. Then the total volume of each connected component of  $\operatorname{Max}_{\mathsf{Sp}_{2n}(\bR)}(S)/\Mod(S)$ is infinite provided $n\ge 2$. 
\end{thm}

We can also define Hitchin-Riemann moduli spaces for compact surfaces with boundary. Let $S$ be  a compact orientable surface of negative Euler characteristic with oriented boundary components $\zeta_1, \cdots, \zeta_b$. We choose an ordered set $\mathbf{B}=(B_1, \cdots, B_b)$ of $\mathsf{G}$-conjugacy classes of elements that are conjugate into the positive Weyl chamber.  The relative Hitchin component $\Hit_\mathsf{G}(S, \mathbf{B})$ with respect to the boundary holonomy $\mathbf{B}$ is given by
\[
\Hit_\mathsf{G}(S, \mathbf{B})=\{[\rho] \in \Hit_\mathsf{G}(S)\,|\, \rho(\zeta_i)\in B_i,\,i=1,2,\cdots,b\}.
\]
For a compact surface $S$, we define its mapping class group $\Mod(S)$ to be the isotopy classes of self-homeomorphisms preserving an orientation and each boundary component set-wise. Because $\Mod(S)$ also acts on $\Hit_\mathsf{G}(S,\mathbf{B})$ properly, we can consider the moduli space $\mathcal{M}_\mathsf{G}(S, \mathbf{B})= \Hit_\mathsf{G}(S,\mathbf{B})/\Mod(S)$. The $\Mod(S)$-invariant symplectic structure $\omega_{GHJW}$ on $\Hit_\mathsf{G}(S, \mathbf{B})$ is due to Guruprasad-Huebschmann-Jeffrey-Weinsetin \cite{GHJW}. We also denote by $\vol$ the volume form on $\Hit_\mathsf{G}(S, \mathbf{B})$ and on $\mathcal{M}_\mathsf{G}(S,\mathbf{B})$ associated with $\omega_{GHJW}$.

The Weil-Petersson volume of the Riemann moduli space is finite and there is a recursive formula for computing its volume by Mirzakhani \cite{mirzakhani2007,mirzakhani20072}. Despite intimate relationship between $\mathcal{M}(S,\mathbf{B})$ and $\mathcal{M}_{\PSL_n(\bR)}(S,\mathbf{B})$,  Labourie and McShane \cite{LM09} suggested that the volume of $\mathcal{M}_{\PSL_n(\bR)}(S,\mathbf{B})$ is infinite for $n\ge 3$ when $S$ has a non-empty boundary. The technique for the proof of Theorem~\ref{thm:main} also confirms this claim:

\begin{thm}\label{thm:main2} 
Let $S$ be a compact orientable surface with $b>0$ boundary components of negative Euler characteristic. Let $\mathbf{B}=(B_1, \cdots, B_b)$ be a choice of boundary holonomy. The total volume of the relative $\PSL_n(\bR)$-Hitchin-Riemann moduli space $\mathcal{M}_{\PSL_n(\bR)}(S,\mathbf{B})$ with respect to the volume form $\vol$ is infinite provided $n\ge 3$. 
\end{thm}

We could not establish the relative version of Theorem~\ref{thm:main} for general split real forms, partly due to the lack of knowledge on the relative $\mathsf{G}$-Hitchin components $\Hit_\mathsf{G} (S, \mathbf{B})$. As a matter of fact, we do not even know whether $\Hit_\mathsf{G} (S, \mathbf{B})$ is non-empty for a given boundary holonomy $\mathbf{B}$, unless $\mathsf{G}=\PSL_n(\bR)$.

Our proofs for both Theorems~\ref{thm:main}, \ref{thm:main2} and \ref{thm:maximal} use a special Goldman flow, featured for instance in \cite{WZ18, FK, Goldman2013,goldman22}, along an essential simple closed curve $\alpha$ in $S$ together with the collar lemma obtained by Beyrer--Guichard--Labourie--Pozzetti--Wienhard \cite{beyrer2024} and Burger--Pozzetti \cite{burger2017}. The main idea is to find a small open set $N$ in the Hitchin component (or the maximal component) where the marked length spectrum have some kind of non-degeneracy. This non-degeneracy forces the set $\{\varphi \in \Mod(S)\,|\,\varphi (N) \cap N \ne \emptyset\}$ consist only of some Dehn twists along $\alpha$. Then we observe that if $N$ is flown sufficiently long time by the Goldman flow, no powers of the Dehn twist along $\alpha$ can bring it back to $N$. This allows us to construct an infinite family of pairwise disjoint open sets with the identical positive volume in the moduli space.

We hope to apply our result to other higher Teichm\"uller spaces. The most ambitious targets are the moduli spaces $\mathcal{M}_{\mathsf{G},\Theta}(S)$ of $\Theta$-positive representations \cite{bradlow2024, guichard2021} of a surface group $\pi_1(S)$ into a higher rank semi-simple Lie group $\mathsf{G}$ with a positive structure $\Theta$. Since they are universal models for all higher Teichm\"uller spaces, proving that $\mathcal{M}_{\mathsf{G},\Theta}(S)$ has infinite volume whenever $\operatorname{rank}\mathsf{G}>1$ will provide a unifying picture that the condition $\operatorname{rank}\mathsf{G}=1$ is really crucial for higher Teichm\"uller spaces to support the finite covolume action of the mapping class group. 

\subsection*{Outline of the paper} 
In Section~\ref{sec:hitchincomponent}, we recall some definitions of our main subjects, including Hitchin and maximal representations, Hitchin-Riemann moduli spaces and their symplectic structures. We first introduce the definition of $\mathsf{G}$-Hitchin representations for a surface-with-boundary. We also review some results on maximal representations. We also study the associated limit maps for Hitchin and maximal representations and show that one can choose a canonical limit map in Lemmas~\ref{lem:normalization} and ~\ref{lem:normalizationmax}. 

In Section~\ref{sec:goldmanflow}, we introduce algebraic bending deformations and Goldman flows. After developing algebraic bending, we explain how to understand the action of a Dehn twist in terms of an algebraic bending. We also discuss how to decompose a Hitchin and maximal component into Hitchin and maximal components of subsurfaces of smaller complexity. Finally, we show that algebraic bending deformations are proper in Proposition~\ref{prop:proper}. The properness of algebraic bending will play a key role. 

Section~\ref{mainsection} is devoted to proving our main results. We construct an open set that will never return to intersect itself under the $\Mod(S)$-action in Propositions~\ref{prop:length},~\ref{prop:lengthrel} and ~\ref{prop:lengthmax}.  Then in Lemma~\ref{lem:commute}, we show that, under mild assumptions, Goldman flows commute with the mapping class group action. The main step toward our main theorem is Lemma~\ref{lem:disj}, which shows that the open set constructed in Propositions~\ref{prop:length},~\ref{prop:lengthrel} and ~\ref{prop:lengthmax} will not come back by the mapping class group action. Given this lemma, the main results for the $\mathsf{G}$-Hitchin and of $\Sp_{2n}(\bR)$-maximal components follow easily. 

In Section~\ref{sec:question}, we append some remaining questions and future directions. Especially we discuss possible future candidates where our technique is applicable such as the moduli space of $\Theta$-positive representations. 

\subsection*{Acknowledgment} We thank Steven Bradlow, William Goldman, Oscar Garc\'{i}a-Prada, Johannes Huebschmann, Fran\c{c}ois Labourie, Gye-Seon Lee, Ilia Smilga and  Tengren Zhang for helpful conversations.

The first author was supported by the National Research Foundation of Korea (NRF) grant funded by the Korea government (MEST) (No. 2022R1A2C300316212). The second author was supported by the BK21 SNU Mathematical Sciences Division.

\section{Hitchin and maximal representations} \label{sec:hitchincomponent} 

In this section, we recall Hitchin and maximal representations and their symplectic structures. Unless otherwise stated, $S$ will denote a compact orientable surface of negative Euler characteristic.

\subsection{Lie groups}\label{sec:notation}
Let $\mathsf{G}$ be a connected semisimple Lie group. We know that the Lie algebra $\mathfrak{g}$ of $\mathsf{G}$ admits the Cartan decomposition $\mathfrak{g} = \mathfrak{k}\oplus \mathfrak{h}$ where $\mathfrak{k}$ is the maximal compact subalgebra. Let $\mathfrak{a}$ be the maximal abelian subspace of $\mathfrak{h}$. For a linear functional $\beta:\mathfrak{a}\to \bR$, we define
\[
\mathfrak{g}_\beta : = \{X\in \mathfrak{g}\,|\, [H,X] = \beta(H) X\,\text{ for all }H\in \mathfrak{a}\}
\]
and call $\beta$ a (restricted) root provided $\mathfrak{g}_\beta\ne\{0\}$. Let $\Sigma$ be the set of restricted roots of $\mathfrak{g}$.  The Lie algebra $\mathfrak{g}$ admits the restrict root space decomposition
\[
\mathfrak{g}=\mathfrak{g}_0 \oplus \bigoplus_{\beta\in \Sigma} \mathfrak{g}_\beta.
\]
We know that  $(\mathfrak{a}^* , \Sigma, \langle, \rangle)$ is a root system where  $\langle\cdot ,\cdot \rangle$ is the inner product on $\mathfrak{a}^*$ induced from the killing form. We introduce an order on the set of roots and let $\Sigma^+$ and $\Delta$ be the sets of  positive roots and simple roots of $\mathsf{G}$ respectively.

\begin{figure}[htb]
    \centering
    \includegraphics[width=\linewidth]{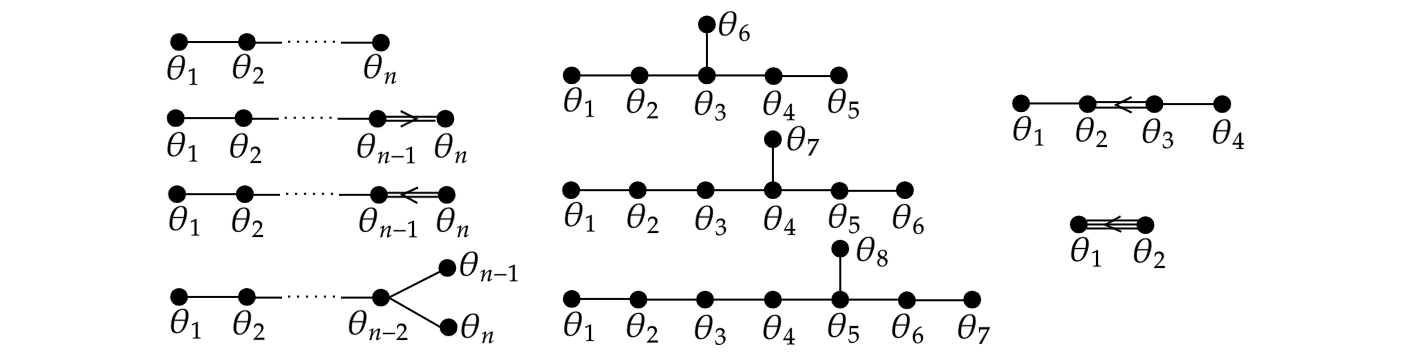}
    \caption{Enumeration of simple roots}
    \label{fig:simpleroots}
\end{figure}
We will enumerate the simple roots as in Figure~\ref{fig:simpleroots}. We will use this convention throughout this paper. A fundamental weight attached to a simple root $\theta_i\in \Delta$ is a linear functional $\omega_{\theta_i}:\mathfrak{a}\to \bR$ satisfying 
\[
2\frac{\langle \omega_{\theta_i}, \theta_j\rangle}{|\theta_j|^2} = \begin{cases}
    1 & i=j \\ 0 & i\ne j
\end{cases}.
\]
As $\Delta$ is a basis for $\mathfrak{a}^*$, we may write fundamental weights as linear combinations of simple roots. In particular, we will need the expression for $\omega_{\theta_1}+\omega_{-w_0\cdot \theta_1}$ where $w_0$ is the longest element in the Weyl group. One can find it by straightforward computation using the Cartan matrix. The result is summarized in Table~\ref{tab:fundamentalweights}. 
\begin{table}[htb]
    \centering
    \begin{tabular}{c|c}
    \hline
         Root system & Expression for $\omega_{\theta_1}+\omega_{-w_0\cdot \theta_1}$\\\hline
         $\mathsf{A}_n$& $\theta_1+\theta_2+\cdots +\theta_n$\\
         $\mathsf{B}_n$& $2\theta_1+2\theta_2 + \cdots + 2\theta_n$\\
         $\mathsf{C}_n$ & $2\theta_1+2\theta_2 + \cdots + 2\theta_{n-1}+\theta_n$\\
         $\mathsf{D}_n$ & $2\theta_1+2\theta_2+\cdots +2\theta_{n-2}+\theta_{n-1}+\theta_n$\\
         $\mathsf{E}_6$ & $2\theta_1 + 3\theta_2 +4 \theta_3 + 3\theta_4 + 2\theta_5 + 2\theta_6 $\\
         $\mathsf{E}_7$& $3\theta_1 + 4 \theta_2 +5\theta_3 +6 \theta_4+4\theta_5 +2 \theta_6 +3\theta_7$\\
         $\mathsf{E}_8$& $4\theta_1+6\theta_2+8\theta_3+10\theta_4+12\theta_5+8\theta_6+4\theta_7+6\theta_8$\\
         $\mathsf{F}_4$& $4\theta_1+6\theta_2+8\theta_3+4\theta_4$\\
         $\mathsf{G}_2$& $4\theta_1+6\theta_2$\\
         \hline
    \end{tabular}
    \caption{}
    \label{tab:fundamentalweights}
\end{table}

For a given split real form $\mathsf{G}$ of a complex adjoint simple Lie group, there are particularly nice representations of $\PSL_2(\bR)$ into $\mathsf{G}$. Let $\mathfrak{g}$  be the Lie algebra of $\mathsf{G}$. A \emph{three dimensional subalgebra} (TDS, in short) is a Lie subalgebra generated by nilpotent elements $E,F$ and a semisimple element $H$ in $\mathfrak{g}$ satisfying the relations $[H,E] =2E$, $[H,F]=-2F$ and $[E,F]=H$. For each positive root $\beta\in \Sigma^+$, we can find an associated $\mathfrak{sl}_2$-triple $E_\beta\in \mathfrak{g}_\beta$, $F_\beta\in \mathfrak{g}_{-\beta}$, and $H_\beta\in \mathfrak{a}$.  Also if we choose $E$, $F$ and $H$ as regular elements, the resulting TDS is called principal. A principal TDS induces a Lie group representation of $\PSL_2(\mathbb{C})$ into the complexification of $\mathsf{G}$. If we take the real points of it, we get the preferred representation $\tau_{\mathsf{G}}:\PSL_2(\bR)\to \mathsf{G}$.

Throughout this paper we will focus on Hitchin and maximal representations. When we discuss Hitchin representations, $\mathsf{G}$ will denote a split real form of a complex adjoint simple Lie group. When we are dealing with maximal representations $\mathsf{G}$ will denote the real symplectic group $\Sp_{2n}(\bR)$.

\subsection{Hitchin components}

The Hitchin components are originally defined for closed surfaces with genus at least 2. However, one can naturally generalize the definition to surfaces with boundary.

A $\mathsf{G}$-\emph{Hitchin representation} for a compact orientable surface $S$ possibly with boundary is a representation $\rho: \pi_1(S)\to \mathsf{G}$ such that there is a continuous family $\rho_t$ of representations with properties
    \begin{itemize}
        \item $\rho_0 = \rho$ and $\rho_1$ is a Fuchsian representation
        \item For each $t$ and each boundary component $\zeta$, we request that each $\rho_t(\zeta)$ is a loxodromic element, i.e., conjugate to an element in a positive Weyl chamber of $\mathsf{G}$. 
    \end{itemize}
Here, a \emph{Fuchsian representation} is any representation of the form $\tau_\mathsf{G} \circ \rho$ where $\rho$ is the holonomy of a hyperbolic metric on $S$ with totally geodesic boundary and $\tau_\mathsf{G}:\PSL_2(\bR) \to \mathsf{G}$ is the preferred representation defined in Section~\ref{sec:notation}. 

For $\mathsf{G}=\PSL_n(\bR)$, this definition coincides with the one in Labourie--McShane \cite{LM09}.

The set of Hitchin representations is a union of certain connected components of $\Hom(\pi_1(S),\mathsf{G})$. Let $\Hom_{\mathrm{H}}(\pi_1(S), \mathsf{G})$ be one of connected components of Hitchin representations and let $\Hit_\mathsf{G}(S):=\Hom_{\mathrm{H}}(\pi_1(S),\mathsf{G})/\mathsf{G}$. By convention, denote  $\Hit_{\PSL_n(\bR)}(S)$ simply by $\Hit_n(S)$.

The following lemma is essentially due to Zhang \cite[Proposition 3.5]{zhang2015} based on Bonahon--Dreyer \cite{bonahon2014}. 

\begin{lem}[\cite{zhang2015}]\label{boundaryfreedom}
    Let $\alpha_1, \cdots, \alpha_k$ be  pairwise disjoint non-isotopic essential simple closed curves in $S$. Let $C_1, \cdots, C_k$ be conjugacy classes of diagonal matrices with distinct positive eigenvalues. Let $\mathbf{B}=(B_1, \cdots, B_b)$ be any choice of boundary holonomy. Then there is $[\rho]\in \Hit_n(S, \mathbf{B})$ such that $\rho (\alpha_i ) \in C_i$ for $i=1,2,\cdots, k$.
\end{lem}

\begin{rem}
    To our knowledge, Lemma~\ref{boundaryfreedom} is known only for $\PSL_n(\bR)$-Hitchin representations. 
\end{rem}

\subsection{Maximal representations}
Throughout this paper, we will only deal with $\Sp_{2n}(\bR)$-maximal representations. Note however that maximal representations can be defined for general Hermitian type Lie groups. 

Given a representation $\rho\in \Hom(\pi_1(S), \Sp_{2n}(\bR))$, we associate so-called the \emph{Toledo invariant} $T(\rho)\in \mathbb{Z}$. In \cite{burger2010}, it is showed that this number takes integral values and falls into the range $-n|\chi(S)|\le T(\rho)\le n|\chi(S)|$. Call a representation $\rho$ \emph{maximal} if $T(\rho) =  n|\chi(S)|$ and let
\begin{align*}
\Hom_{\mathrm{M}}(\pi_1(S),\Sp_{2n}(\bR)) &:=\{\rho \in \Hom(\pi_1(S),\Sp_{2n}(\bR))\,|\,\rho\text{ is maximal}\},\\ 
\Max_{\Sp_{2n}(\bR)}(S)&:=\Hom_{\mathrm{M}}(\pi_1(S),\Sp_{2n}(\bR)) /\Sp_{2n}(\bR). 
\end{align*}
Since we focus only on $\Sp_{2n}(\bR)$-maximal representations, write simply $\Max_{2n}(S) = \Max_{\Sp_{2n}(\bR)}(S)$. 

$\Max_{2n}(S)$ consists of many connected components. If $S$ is a closed surface with genus $g$, $\operatorname{Max}_{2n}(S)$ has $3\cdot 2^{2g}$ components when $n>2$ \cite{garcia2013} and $3\cdot 2^{2g}+2g-4$ components when $n=2$ \cite{garcia2004}. Moreover, Guichard-Wienhard \cite{guichard2010} showed that each connected component of $\Hom_{\mathrm{M}}(\pi_1(S),\Sp_{2n}(\bR))$ contains explicit representations that we will describe below. 

Consider the standard irreducible embedding $\tau_{2n}:\SL_2(\bR)\to \SL_{2n}(\bR)$. By adopting the identification  \cite[Section 3.2.1]{guichard2010},  we may understand $\tau_{2n}$ as the embedding $\tau_{2n}:\SL_2(\bR)\to \Sp_{2n}(\bR)$. On the other hand, we can decompose $\bR^{2n}=\bR^n \otimes \bR^2$ where $\bR^n$ is given the Euclidean inner product $q$ and $\bR^2$ is equipped with the standard symplectic form $\omega$. Then the bilinear form $q\otimes \omega$ given by 
\[
(x\otimes z, y\otimes w)\mapsto q(x,y)\omega(z,w)
\]
is a symplectic form on $\bR^n \otimes \bR^2$. Define the isomorphism $\bR^n\otimes \bR^2\to \bR^{2n}$ by 
\[
e_i\otimes e_1\mapsto e_i,\qquad e_i\otimes e_2 \mapsto e_{i+n}. 
\]
This map gives rise to an isomorphism $\delta_{2n}: \Sp(\bR^n\otimes \bR^2)\to \Sp_{2n}(\bR)$.  Given a lifted  Fuchsian representation $\rho:\pi_1(S) \to \SL_2(\bR)$, $\delta_{2n}\circ(I_{n}\otimes \rho)$ and $\tau_{2n}\circ \rho$ are maximal representations, called \emph{standard diagonal} and \emph{standard irreducible representations} respectively. Here $I_n$ denotes the $n\times n$ identity matrix. Note also that the centralizer of a standard diagonal representation is the maximal compact subgroup $\mathsf{O}_n$. This allows us to twist $\delta_{2n} \circ(I_n\otimes \rho)$ by a representation $\kappa: \pi_1(S)\to \mathsf{O}_n$, to obtain another maximal representation
\[
\delta_{2n}\circ(\kappa \otimes  \rho).
\]
Such representations are called \emph{twisted diagonal representations}. Both twisted diagonal and irreducible maximal representations are referred to \emph{standard maximal representations}.

When $n=2$, we have to consider another type of maximal representations. Take a separating simple closed curve $\alpha$ in $S$ cutting $S$ into subsurfaces $S_1$ and $S_2$ and let $\rho_2:\pi_1(S_2)\to \SL_2(\bR)$ be a lifted Fuchsian representation. Up to conjugation, we may write $\rho_2(\alpha) = \operatorname{diag}(e^l,  e^{-l})$ for some $l>0$. We continuously deform $\rho_2$ to another representation $\rho_2':\pi_1(S_2)\to \SL_2(\bR)$ so that $\rho_2'(\alpha) = \operatorname{diag}(e^{3l},e^{-3l})$. Let us take another lifted Fuchsian representation $\rho_1:\pi_1(S_1)\to \SL_2(\bR)$ such that $\rho_1(\alpha)=\operatorname{diag}(e^l, e^{-l})$. Since 
\[
\tau_{2n}\circ\rho_1(\alpha) = \operatorname{diag}(e^{3l},e^l,e^{-3l},e^{-l}),
\]
one can show that the irreducible representation $\tau_{2n}\circ\rho_1$ and the representation $\gamma \mapsto \psi(\rho_2(\gamma), \rho_2'(\gamma))$
can be glued along $\alpha$, where
\[
\psi\left(\begin{pmatrix}
    a & b \\ c & d
\end{pmatrix},\begin{pmatrix}
    p &q \\ r & s
\end{pmatrix}\right)=\begin{pmatrix}
    a & 0 & b & 0 \\
    0 & p & 0 & q \\
    c & 0 & d & 0 \\
    0 & r & 0 &s
\end{pmatrix}.
\] Such amalgamated representations are called \emph{standard hybrid representations}.

\begin{thm}[Theorems 11 and 14 of \cite{guichard2010}]\label{thm:ComponentContainsFuchsian}
    Each connected component of $\Max_{2n}(S)$ contains a standard maximal representation or a standard hybrid representation. 
\end{thm}

\begin{lem}\label{lem:simpleeigenvalues}
   Let $\rho_0\in \Hom_{\mathrm{M}}(\pi_1(S),\Sp_{2n}(\bR))$ and let $U$ be any neighborhood of $\rho_0$. Let $\gamma$ be a non-separating essential simple closed curve in $S$ realized as an element of $\pi_1(S)$. Then there is $\rho\in U$ such that eigenvalues of $\rho(\gamma)$ are simple. If all eigenvalues of  $\rho_0(\gamma)$ are real, we may find $\rho\in U$ such that $\rho(\gamma)$ is loxodromic.
\end{lem}
\begin{proof} 
   We use Fenchel-Nielsen type parameters introduced in \cite{strubel2015}.  
   
    Decompose $S$ into a collection of pair-of-pants and once-punctured tori such that $\gamma$ is contained in one of once-punctured tori $S'$ as an essential simple closed curve. Let $\mathcal{B}\subset \mathsf{GL}_n(\bR)$ be the set of matrices whose absolute values of the (complex) eigenvalues are bigger than 1. Let $(Y_0,H_0,Z_0)\in\mathcal{B}\times \overline{\mathcal{B}}\times \mathcal{B}$  be coordinate matrices corresponding to $\rho_0|\pi_1(S')$. The matrices $Y_0$, $H_0$ and $Z_0$ are subject to the following condition:
    \begin{equation}\label{eq:coordinate}
       Z_0Y_0 ^{\mathsf{T}} Z_0 ^{-1} (H_0 ^{\mathsf{T}})^{-1} Y_0 \text{ is symmetric and positive definite.}
     \end{equation} 
    In this coordinate, $\rho_0(\gamma)$ is of the form
    \begin{equation}\label{eq:standard}
    \begin{pmatrix}
        Y_0 & O \\ * & (Y_0 ^{-1})^{\mathsf{T}}
    \end{pmatrix}.
    \end{equation}
    Conversely, one can find an open neighborhood $U_1\times U_2\times U_3\subset \mathcal{B}\times \overline{\mathcal{B}}\times \mathcal{B}$ of $(Y_0,H_0,Z_0)$ such that for any triple of matrices $(Y,H,Z)\in U_1\times U_2\times U_3$ satisfying the above conditions (\ref{eq:coordinate}), there is $\rho\in U$ such that $\rho(\gamma)$ is of the form   
    \[
    \begin{pmatrix}
        Y & O \\ * & (Y ^{-1})^{\mathsf{T}}
    \end{pmatrix}.
    \]

    Let $S_0:=Z_0Y_0 ^{\mathsf{T}} Z_0 ^{-1} (H_0 ^{\mathsf{T}})^{-1} Y_0$. This is a symmetric and positive definite matrix. Consider the map $f:\mathsf{GL}_n(\bR)\to\mathsf{GL}_n(\bR)$ given by  
    \[
    f:Y\mapsto (Z_0^{\mathsf{T}})^{-1} Y Z_0 ^{\mathsf{T}}(S_0 ^{-1})^{\mathsf{T}} Y^{\mathsf{T}}.
    \]
    Then, $f$ is a continuous map with $f(Y_0) = H_0$. Choose a continuous function $c:U_1\to [1,\infty)$ such that $c(Y_0)=1$ and that the absolute values of eigenvalues of $c(Y) f(Y)$ are bigger than 1. Define the continuous map $h:U_1 \to \overline{\mathcal{B}}$ by $Y\mapsto c(Y) f(Y)$. Note that $h(Y_0) = H_0$. 

    One can find $Y \in h^{-1}(U_2)\cap U_1$ whose eigenvalues are simple and strictly bigger than 1. Moreover if all eigenvalues of $Y_0$ are real, we can find a loxodromic $Y\in h^{-1}(U_2)\cap U_1$. Since $(Y,h(Y),Z_0)$ satisfies the condition (\ref{eq:coordinate}), there is a maximal representation $\rho\in U$ such that the  eigenvalues of $\rho(\gamma)$ are all simple. We also know that if eigenvalues of $\rho_0(\gamma)$ are all real, $\rho(\gamma)$ is loxodromic.
    \end{proof}

Note that non-Hitchin components of $\Max_{2n}(S)$ are singular because twisted diagonal representations have centralizer $\mathsf{O}_n$. To avoid singularities, we define 
\begin{align*}
    \Hom_{\mathrm{M}}^+(\pi_1(S),\Sp_{2n}(\bR))&:=\{\rho\in \Hom_{\mathrm{M}}(\pi_1(S), \Sp_{2n}(\bR))\,|\, Z(\rho) =\{1\}\}\\
    \Max_{2n} ^+ (S)&:=\Hom_{\mathrm{M}} ^+ (\pi_1(S),\Sp_{2n}(\bR))/\Sp_{2n}(\bR),
\end{align*}
where $Z(\rho)$ is the centralizer of $\rho$. It is a standard fact that $\Max_{2n}^+(S)$ is a smooth manifold.

\begin{lem}\label{lem:generic}
    $\Max_{2n} ^+ (S)$ is open and dense in $\Max_{2n}(S)$. 
\end{lem}
\begin{proof}
     Because  $\Hom_{\mathrm{M}}^+ (\pi_1(S), \Sp_{2n}(\bR))$ is Zariski open, we know that $\Max_{2n} ^+ (S)$ is open in $\Max_{2n}(S)$ and is dense in each connected component of $\Max_{2n}(S)$ that has non-trivial intersection with $\Max_{2n} ^+ (S)$. Hence, it suffices to show that  $\Hom_{\mathrm{M}}^+ (\pi_1(S), \Sp_{2n}(\bR))$ and each connected component of $\Hom_{\mathrm{M}}(\pi_1(S),\Sp_{2n}(\bR))$ have non-empty intersection. 
     
     Combining Theorem~\ref{thm:ComponentContainsFuchsian} and \cite[Theorem 7]{burger2010}, we know that each connected component of $\Hom_{\mathrm{M}}(\pi_1(S),\Sp_{2n}(\bR))$ contains a Zariski dense representation. Since the centralizer of a Zariski dense representation is trivial, $\Hom_{\mathrm{M}}^+ (\pi_1(S), \Sp_{2n}(\bR))$ intersects each connected component of $\Hom_{\mathrm{M}}(\pi_1(S),\Sp_{2n}(\bR))$ as we wanted.
\end{proof}

\subsection{Limit maps and positivity} 

Both Hitchin representations and maximal representations enjoy positivity. We briefly recall the notion of positive representation and drive some lemmas that we will use in the next section. For general theory of positive representations, the reader should consult \cite{guichardpositivity,guichard2021}.

Recall that the sets of restricted roots, positive roots and simple roots of $\mathsf{G}$ are respectively denoted by $\Sigma$, $\Sigma^+$ and $\Delta$.

We choose a subset $\Theta$ of $\Delta$ and denote $\Sigma_\Theta ^+ = \Sigma^+ \setminus \Span (\Delta \setminus \Theta)$. For such a given $\Theta$ and an element $\theta\in \Theta$, we define 
\begin{align*}
\mathfrak{u}_\Theta &:= \sum_{\beta\in \Sigma_\Theta ^+ }\mathfrak{g}_\beta,\\
\mathfrak{u}_{\Theta}^{\operatorname{op}}&:=\sum_{\beta\in \Sigma_{\Theta}^+}\mathfrak{g}_{-\beta},\\
\mathfrak{t}_\Theta &:= \bigcap_{\beta \in\Delta \setminus \Theta} \ker \beta,\\ 
\mathfrak{u}_\theta &:= \sum_{\substack{\beta\in \Sigma\\ \beta|\mathfrak{t}_\Theta = \theta}} \mathfrak{g}_\beta.
\end{align*}
Let $\mathsf{P}_\Theta$ and $\mathsf{P}^{\operatorname{op}}_\Theta$ be the subgroups of $\mathsf{G}$ normalizing $\mathfrak{u}_\Theta$ and $\mathfrak{u}_\Theta ^{\operatorname{op}}$ respectively. They are called the standard parabolic subgroup and the standard opposite parabolic subgroup associated with $\Theta$. We take the identity component $\mathsf{L}_\Theta ^0$ of the Levi factor of $\mathsf{P}_\Theta \cap \mathsf{P}_{\Theta} ^{\operatorname{op}}$.

We say that $\mathsf{G}$ admits the $\Theta$-\emph{positive structure} if for each $\theta\in \Theta$, there is an acute convex open cone $c_\theta$ in $\mathfrak{u}_\theta$ invariant under the $\mathsf{L}_\Theta ^0$-action. 

Positive structures of $\mathsf{G}$ are classified \cite{guichardpositivity}; The $\Theta$-positive structures that are relevant to our case are the followings.
\begin{itemize}
    \item[(a)] $\mathsf{G}$ is a split real form of a complex simple adjoint group and $\Theta = \Delta$;
    \item[(b)] $\mathsf{G} = \Sp_{2n}(\bR)$ and $\Theta = \{\theta_n \}$ where $\theta_n$ is the longest root. 
\end{itemize}
We will see that the case (a) is associated with $\mathsf{G}$-Hitchin representations, whereas the case (b) is related to $\Sp_{2n}(\bR)$-maximal representations. 

Suppose that we are given a $\Theta$-positive structure of $\mathsf{G}$. We can define the unipotent $\Theta$-positive semigroup $\mathsf{U}_\Theta ^{>0}$ to be the semigroup generated by the exponentials of the cones $\exp( c_\theta)$, $\theta\in \Theta$. 
\begin{thm}[Theorem 10.1 of \cite{guichardpositivity}]\label{thm:parameter}
Fix a reduced word expression $s_{i_1}\cdots s_{i_\ell}$ of the longest element of the $\Theta$-Weyl group. The map
\[
c_{\theta_{i_1}}\times \cdots \times c_{\theta_{i_\ell}} \to \mathsf{U}_\Theta ^{>0}
\]
given by 
\[
(v_1, \cdots, v_\ell) \mapsto \exp (v_1) \cdots \exp (v_\ell)
\]
is a diffeomorphism. 
\end{thm}
When $\Theta= \Delta$, we know that the $\Theta$-Weyl group is the same as the usual Weyl group of $\mathsf{G}$ and each cone $c_\theta$ is 1-dimensional and is generated by the element  $E_\theta$. Therefore, the theorem shows that $\mathsf{U}_\Theta ^{>0}$ can be parameterized by the positive $\ell$-tuples of positive real numbers
\[
(t_1, \cdots, t_\ell) \mapsto \exp (t_1 E_{\theta_{i_1}}) \cdots \exp (t_\ell E_{\theta_{i_\ell}}).
\]

Let $\mathcal{F}_\Theta= \mathsf{G}/\mathsf{P}_\Theta$ be the flag manifold associated with a $\Theta$-positive structure of $\mathsf{G}$. $\mathcal{F}_\Theta$ can be identified with the $\mathsf{G}$-orbit of the Lie algebra $\mathfrak{p}_\Theta$ of the standard parabolic subgroup. Observe that $\mathfrak{p}_\Theta ^{\operatorname{op}}=w_\Theta \mathfrak{p}_\Theta$ where $w_\Theta$ is the longest element of the $\Theta$-Weyl group. 

We say that a triple $(x,y,z)$ of distinct elements in $\mathcal{F}_\Theta$ is \emph{positive} if there is $g\in \mathsf{G}$ such that $(g\cdot x ,g\cdot y, g\cdot z)= (\mathfrak{p}_\Theta, u\cdot \mathfrak{p}^{\operatorname{op}}_\Theta ,\mathfrak{p}^{\operatorname{op}}_\Theta)$ for some $u\in \mathsf{U}_\Theta ^{>0}$. An $n$-tuple $(x_1, x_2, \cdots, x_n)$ of distinct elements of $\mathcal{F}_\Theta$ is positive if every triple  $(x_i, x_{i+1}, x_{i+2})$ (cyclically indexed) is positive. Finally, we say that a representation $\rho:\pi_1(S) \to \mathsf{G}$ is $\Theta$-\emph{positive} if there is a continuous $\rho$-equivariant limit map $\xi_\rho:\partial_\infty \pi_1(S) \to \mathcal{F}_\Theta$  sending positive tuples to positive tuples, where $\partial_\infty \pi_1(S)$ is given a cyclic order induced from an orientation of $S$. The following theorem is now well-known; see \cite{fock,burger2010,guichard2021}.

\begin{thm} Let $S$ be a compact orientable surface of negative Euler characteristic. 
\begin{itemize}
    \item[(i)] Let $\mathsf{G}$ be a split real form of a complex simple adjoint group. Then 
    $\mathsf{G}$-Hitchin representations of $\pi_1(S)$ are $\Theta$-positive for $\Theta =\Delta$. 
    \item[(ii)] $\Sp_{2n}(\bR)$-maximal representations of $\pi_1(S)$ are $\Theta$-positive for $\Theta=\{\theta_n\}$. 
\end{itemize}
\end{thm}

Consider a split real form $\mathsf{G}$ with $\Theta = \Delta$. Enumerate $\Delta= \{\theta_1, \cdots, \theta_r\}$ as in Figure~\ref{fig:simpleroots} in Section~\ref{sec:notation}. Note that a reduced word expression of $w_0$ contains each root reflection $s_{\theta_i}$ at least once. In view of Theorem~\ref{thm:parameter}, each $u\in \mathsf{U}_\Theta ^{>0}$ can be written  as
\[
u= \cdots \exp (t_1 E_{\theta_1})\cdots \exp (t_2 E_{\theta_2}) \cdots \exp (t_3 E_{\theta_3})\cdots \exp (t_r E_{\theta_r})\cdots.
\]
Thus, we can find a smooth map 
\[
f_\Theta:  \mathsf{U}_\Theta ^{>0}\to (\bR _ {>0})^r
\]
defined by $f_\Theta(u) = (t_1, \cdots, t_r)$. Choose $b_0 := (1,2,\cdots, r)\in (\bR_{>0})^r$ and let
\[
U_\mathsf{G}:=\{(\mathfrak{p}_\Theta, u\cdot \mathfrak{p}_\Theta^{\operatorname{op}}, \mathfrak{p}_\Theta^{\operatorname{op}})\,|\, u\in f_\Theta^{-1}(b_0)\}.
\]

The following lemma will be used in the next section. 

\begin{lem}\label{lem:normalization} Let $S$ be a compact orientable surface of negative Euler characteristic.  Give an orientation to $\partial_{\infty}\pi_1(S)$.  Let $\mathsf{G}$ be  a split real form and $\Theta= \Delta$. Let $\rho:\pi_1(S)\to \mathsf{G}$ be a $\Theta$-positive representation with the limit map  $\xi_\rho:\partial_\infty \pi_1(S)\to \mathcal{F}_\Theta$.  For a positively ordered triple $(x,z,y)\in \partial_\infty \pi_1(S)$, there is an element $g\in \mathsf{G}$ such that 
\[(g\cdot \xi_\rho (x), g\cdot \xi_\rho(z), g\cdot \xi_\rho(y))\in U_\mathsf{G}
\] 
Moreover, the set
\[
V:=\{g\in \mathsf{L}_\Theta\,|\,g\cdot U_\mathsf{G} \subset U_\mathsf{G}\}
\]
is compact and $[V,\mathsf{L}_\Theta ^0]=1$. 
\end{lem}
\begin{proof}
    By definition, one can find $g\in \mathsf{G}$ such that 
    \[
    (g\cdot \xi_\rho (x), g\cdot \xi_\rho(z), g\cdot \xi_\rho(y))=(\mathfrak{p}_\Theta, u'\cdot \mathfrak{p}_\Theta^{\operatorname{op}}, \mathfrak{p}_\Theta^{\operatorname{op}})
    \]
    for some $u'\in \mathsf{U}_\Theta ^{>0}$. We also observe that the stabilizer of the pair $(\mathfrak{p}_\Theta, \mathfrak{p}_{\Theta}^{\operatorname{op}})$ is $\mathsf{L}_\Theta:=\mathsf{P}_{\Theta}\cap \mathsf{P}_{\Theta}^{\operatorname{op}}$. Therefore, one has to find $g'\in \mathsf{L}_\Theta$ such that $f_\Theta(g'\cdot u') = b_0$. The existence of such an element $g'\in \mathsf{L}_\Theta$ is already shown by Guichard-Wienhard \cite[Remark~5.4]{guichardpositivity}. 
    
    By \cite[Proposition~10.3]{guichardpositivity}, we know that
    \[
    V=\{g\in \mathsf{L}_\Theta \,|\, f_\Theta(g ug^{-1})=b_0\}.
    \]
    Therefore it suffices to show that $\{g\in \mathsf{L}_\Theta \,|\, f_\Theta(g ug^{-1})=b_0\}$ is compact. In fact, since $\mathsf{L}_\Theta$ has finitely many connected components, we only need to show that  
    \[
    \{g\in \mathsf{L}_\Theta ^0 \,|\, f_\Theta(g ug^{-1})=b_0\}
    \]
    is compact. This is almost immediate by in \cite[Corollary~6.2]{guichardpositivity} but we present its proof. Since $\Theta= \Delta$, we know that $\mathsf{L}_\Theta ^0 = \exp (\mathfrak{a})$. Let $g=\exp (X)\in \exp(\mathfrak{a})$. Then we have
    \[
    f_\Theta(g ug^{-1}) = (e^{\theta_1(X)}, 2 e^{\theta_2(X)},\cdots, r e^{\theta_r(X)})=(1,2,\cdots,r).
    \]
    This shows that 
    \[
    \{g\in \mathsf{L}_\Theta ^0 \,|\, f_\Theta(g ug^{-1})=b_0\}=\{1\}.
    \]

    To show that $[V,\mathsf{L}_\Theta ^0]=1$, let $g\in V$. Since $g$ is in the maximal compact subgroup  $K$ of $\mathsf{G}$, and since  $g \mathsf{L}_\Theta^0 g^{-1} = \mathsf{L}_\Theta ^0=\exp(\mathfrak{a})$, we know that $g$ lies in $N_K(\mathfrak{a})$. Thus, $g$ is an element of the Weyl group and by our choice of $b_0$,  $g$ must fix each coordinate of $f_\Theta(u)$. It follows that $g$ is indeed in the centralizer of $\mathfrak{a}$ in $\mathsf{L}_\theta\cap K$. Hence, $g$ commutes with any elements in $\mathsf{L}_\Theta ^0$. 
\end{proof}

Since $\Sp_{2n}(\bR)$-maximal representations are also positive, we can find the positive limit maps. We will give a concrete description of the associated flag manifold $\mathcal{F}_\Theta$ and limit maps. We follow closely to the work of Burger--Iozzi--Wienhard \cite{burger2010}. 

For the positive structure $\Theta=\{\theta_n\}$ of $\Sp_{2n}(\bR)$, the flag manifold $\mathcal{F}_\Theta$ is so-called the Shilov boundary, which can be identified with the space $\mathcal{L}(\bR^{2n})$ of Lagrangians in $\bR^{2n}$. 

Given an $(n\times n)$-symmetric matrix $A=(a_{ij})$, denote by $\mathcal{L}_A$ the Lagrangian spanned by vectors 
\[
\mathbf{e}_{n+1}+\sum_{i=1} ^n a_{i1}\mathbf{e}_{i}, \quad \mathbf{e}_{n+2}+\sum_{i=1} ^n a_{i2}\mathbf{e}_{i}, \quad\cdots\quad, \mathbf{e}_{2n}+\sum_{i=1} ^n a_{in}\mathbf{e}_{i},
\]
where $\mathbf{e}_i$ are the standard unit vectors in $\bR^n$. By convention we set $\mathcal{L}_\infty$ to be the Lagrangian $\Span(\mathbf{e}_{1}, \mathbf{e}_{2},\cdots,\mathbf{e}_{n})$. 

We say that a triple of Lagrangians $(L_1,L_2,L_3)$ is \emph{maximal} if there is $g\in \Sp_{2n}(\bR)$ such that $(gL_1,gL_2,gL_3) = (\mathcal{L}_O, \mathcal{L}_I, \mathcal{L}_\infty)$. See \cite{burger2017} for more intrinsic definition of maximality. The limit map associated to a maximal representation maps positive triples in $\partial_\infty\pi_1(S)$ to maximal triples in $\mathcal{L}(\bR^{2n})$.  

\begin{thm}[\cite{burger2010}]
    Fix an  orientation of $\partial_{\infty}\pi_1(S)$. A representation $\rho:\pi_1(S) \to \Sp_{2n}(\bR)$ is maximal if and only if there is a $\rho$-equivariant left continuous map $\xi_\rho: \partial_\infty \pi_1(S) \to\mathcal{L}(\bR^{2n})$ which maps positive triples to maximal triples. 
\end{thm}

Given two transverse Lagrangians $\mathcal{L}_O$ and $\mathcal{L}_\infty$, the space
\[
\{L\in \mathcal{L}(\bR^{2n})\,|\,L\text{ is transverse to } \mathcal{L}_O\text{ and }\mathcal{L}_\infty\}
\]
can be identified with the symmetric space $\mathsf{GL}_n(\bR)/\mathsf{O}_n$, or more concretely, the space of congruence classes of $(n\times n)$-symmetric positive definite matrices. Let $U_M$ be the unit ball around the identity matrix;
\begin{equation}\label{eq:UM}
U_M := \left\{ A\, \left|\,A\text{ is symmetric, positive definite and } \|A\|^2=\sum_{i=1} ^n \log^2 \sigma_i(A)\le 1\right\}\right., 
\end{equation}
where $\sigma_i(A)$ is the $i$th largest eigenvalue of $A$. 

We want to establish a result similar to Lemma~\ref{lem:normalization} for maximal representations. For this, we recall some definitions regarding the Jordan canonical form. 

Let $\lambda$ be a real number. A \emph{real Jordan block} $J(\lambda)$ with eigenvalue $\lambda$ is a square matrix of the form 
\[
\begin{pmatrix}
    \lambda & 1 & 0 &0 &\cdots & 0 \\
    0 & \lambda & 1 & 0 &\cdots & 0 \\
    0&0 & \ddots &\ddots & \cdots & 1 \\ 
    0 & 0 & 0 & \cdots  & 0 & \lambda
\end{pmatrix}.
\]
More generally, for a complex number $\lambda=a+\sqrt{-1}b$, a real Jordan block $J(\lambda)$ is a square matrix 
\[
\begin{pmatrix}
    K & I_2 & 0 &0 &\cdots & 0 \\
    0 & K & I_2 & 0 &\cdots & 0 \\
    0&0 & \ddots &\ddots & \cdots & I_2 \\ 
    0 & 0 & 0 & \cdots  & 0 &K
\end{pmatrix}
\]
where $K= \begin{pmatrix}
    a & -b \\ b & a
\end{pmatrix}$ and $I_2= \begin{pmatrix}
    1 &0 \\ 0 & 1
\end{pmatrix}$. 

An \emph{ordered real Jordan form} is a block diagonal matrix 
\[
\operatorname{diag}(J(\lambda_1),J(\lambda_2), \cdots, J(\lambda_k)),
\]
where $J(\lambda_i)\in \mathsf{GL}_{n_i}(\bR)$ are real Jordan blocks such that $|\lambda_1| \ge |\lambda_2| \ge\cdots \ge |\lambda_k| $. 

\begin{lem}\label{lem:normalizationmax}
    Let $\rho\in \Hom_{\mathrm{M}}(\pi_1(S),\Sp_{2n}(\bR))$ and let $\xi_\rho$ be the associated limit map. Let $\gamma\in \pi_1(S)\setminus\{1\}$ and let $\gamma^+$ and $\gamma^-$ be the attracting and repelling fixed points of $\gamma$ in $\partial_\infty\pi_1(S)$. Let $z\in \partial_\infty \pi_1(S)$ be such that $(\gamma^+, z, \gamma^-)$ is positively ordered. Then, there is $g\in \Sp_{2n}(\bR)$ such that 
    \begin{itemize}
         \item[(i)] $( g\xi_\rho (\gamma^+), g\xi_\rho (z),g\xi_\rho(\gamma^-))=  (\mathcal{L}_O, \mathcal{L}_A, \mathcal{L}_\infty)$  for some $A\in U_M$.
         \item[(ii)] $g\rho(\gamma)g^{-1}$ is of the form
         \[
         \begin{pmatrix}
             \mathcal{J} & O \\ O & (\mathcal{J}^{\mathsf{T}})^{-1}
         \end{pmatrix}
         \]
         for some  ordered real Jordan form $\mathcal{J}\in \mathsf{GL}_n(\bR)$.
    \end{itemize}
    Moreover, suppose that $\rho$ satisfies (i) and (ii) and $\rho(\gamma)$ is loxodromic. Then the set
    \[
V_\rho:=\{g\in \Sp_{2n}(\bR)\,|\, g \rho g^{-1} \text{ satisfies (i) and (ii)}\}.
\]
is compact in $\Sp_{2n}(\bR)$ and is abelian. 
\end{lem}

\begin{proof}
We know that there is $g_1\in \Sp_{2n}(\bR)$ such that 
\[
(g_1 \xi_\rho (\gamma^+), g_1 \xi_\rho (z),g_1 \xi_\rho(\gamma^-))=  (\mathcal{L}_O, \mathcal{L}_I, \mathcal{L}_\infty).
\]
Hence $g_1 \rho(\gamma) g_1$ is of the form
\[
\begin{pmatrix}
    X & O \\ O & (X^{\mathsf{T}})^{-1}
\end{pmatrix}
\]
for some $X\in \mathsf{GL}_n(\bR)$ whose eigenvalues are bigger than 1. Find $Y\in \mathsf{GL}_n(\bR)$ such that $YXY^{-1}$ is an ordered real Jordan form $\mathcal{J}$ and $\sum _{i=1} ^n \log ^2 \sigma_i (Y) \le 1$. Conjugate $g_1\rho g_1 ^{-1}$ by 
\[
g_2:=\begin{pmatrix}
    Y & O \\ O & (Y^{\mathsf{T}})^{-1}
\end{pmatrix}
\]
so that $\rho_2:=g_2g_1 \rho(\gamma) (g_2g_1)^{-1}$ satisfies (ii). Since we have $\xi_{\rho_2}(z) = \mathcal{L}_{Y Y^{\mathsf{T}}}$, $g_2g_1$ is the desired element in $\Sp_{2n}(\bR)$. 

Now we want to show that the set $V=V_\rho$ is compact and abelian if $\rho$ satisfies (i) and (ii) and $\rho(\gamma)$ is loxodromic. Since 
\[
\rho(\gamma) = \rho(\gamma)=\begin{pmatrix} \mathcal{J} & O \\ O & \mathcal{J}^{-1}\end{pmatrix}
\]
for a real ordered Jordan form $\mathcal{J}$, we know that any element $g$ in $V$ is of the form
\[
\begin{pmatrix}
    X & O \\ O & (X^{\mathsf{T}})^{-1}
\end{pmatrix}
\]
for some $X\in \mathsf{GL}_n(\bR)$ such that $X\mathcal{J}X^{-1} =\mathcal{J}$. Since $\rho(\gamma)$ is loxodromic, we know that $\mathcal{J}$ is a diagonal matrix. Therefore, $X$ is also a diagonal matrix. This shows that $V$ is abelian. Let $\xi_\rho(z) = \mathcal{L}_A$ for some $\|A\|\le 1$. Because $\xi_{g\rho g^{-1}}(z) = \mathcal{L}_{X A X^{\mathsf{T}}}$, we have $\|XAX^{\mathsf{T}}\|\le 1$. Hence, $V$ can be identified with
\[
V = \{X\in\mathsf{GL}_n(\bR)\,|\,   X\mathcal{J}X^{-1}=\mathcal{J} \} \cap \{X\in \mathsf{GL}_n(\bR)\,|\,\|XAX^{\mathsf{T}}\|\le 1\}.
\]
The former is closed, and the latter is compact. Therefore, $V$ is compact as we wanted. 
\end{proof}

\subsection{Action of the mapping class group}\label{sec:MODaction}
For a compact orientable surface $S$  with possibly non-empty boundary, its \emph{mapping class group} $\Mod(S)$ is defined by the group of isotopy classes of self-homeomorphisms that preserve an orientation and each boundary component set-wise: 
\[
\Mod(S):=  \pi_0(\{\phi\in \operatorname{Homeo}^+(S) \,|\,\phi(\zeta) =\zeta \text{ for each component }\zeta\subset \partial S\}).
\]
Note that this definition coincides with the one used in \cite{mirzakhani2007} but is slightly different from that of \cite{farb}. Indeed, so-called the mapping class group in \cite{farb} is the extension of $\Mod(S)$ by the group generated by Dehn twists along peripheral curves.

Given a subset $B$ of the set of boundary components of $S$, we sometimes consider the group
\[
\Mod^*(S, B) := \pi_0 (\{\phi\in \operatorname{Homeo}^+(S)\,|\,\phi(\zeta)=\zeta,\,\zeta\in B\}). 
\]
We write $\Mod^*(S) = \Mod^*(S,\emptyset)$.

Let  $\phi\in \Mod^*(S)$. We choose an orientation preserving homeomorphism  $f:S\to S$ in the class $\phi$. Then $f$ induces an isomorphism $\pi_1(S,x_0) \to \pi_1(S,f(x_0))$. By post-composing an inner automorphism identifying $\pi_1(S,f(x_0))$ with $\pi_1(S,x_0)$, we get an automorphism $f_\sharp:\pi_1(S,x_0) \to \pi_1(S,x_0)$ depending only on the class $\phi$ up to inner automorphisms. The association $\mathcal{N}: \phi \mapsto f_\sharp$ is, therefore, a well-defined homomorphism from $\Mod^*(S)$ to $\operatorname{Out}(\pi_1(S,x_0))$. The homomorphism $\mathcal{N}$ is injective by the Dehn–Nielsen–Baer theorem.  In this fashion, we implicitly regard $\Mod^*(S)$  as a subgroup of $\operatorname{Out}(\pi_1(S))$.

An automorphism $\phi$ on $\pi_1(S)$ induces the diffeomorphism $\phi_*:\Hit_\mathsf{G}(S) \to \Hit_\mathsf{G}(S)$ given by $\phi_*([\rho]) = [\rho\circ \phi^{-1}]$. Because any inner automorphism induces the identity map on $\Hit_\mathsf{G}(S)$, we know that the outer automorphism group, and therefore $\Mod^*(S)$, can act on $\Hit_{\mathsf{G}}(S)$. When $S$ is closed, this $\Mod(S)$-action on $\Hit_\mathsf{G}(S)$ is proper due to \cite[Proposition 6.3.4]{Lab08} and the fact that every $\mathsf{G}$-Hitchin representation is projectively Anosov. This defines the projection
\begin{equation*}
\Pi_\mathsf{G}: \Hit_{\mathsf{G}}(S) \ra \mathcal{M}_{\mathsf{G}}(S):= \Hit_{\mathsf{G}}(S)/\Mod(S).
\end{equation*}

Now suppose that $S$ has a non-empty boundary. As alluded in \cite{LM09}, one can show that $\Mod^*(S)$ acts properly on $\Hit_{\mathsf{G}}(S)$ by using the doubling argument. More precisely, let $\widehat{S}=S\cup_{\partial S} S$ be the topological double of $S$ and regard $S$ as a subsurface of $\widehat{S}$. Given  $[\rho]\in\Hit_{\mathsf{G}}(S)$, one can find a Hitchin representation $\widehat{\rho}:\pi_1(\widehat{S})\to\mathsf{G}$ such that $\widehat{\rho}|\pi_1(S) = \rho$ called the doubling of $\rho$ (see \cite[Corollary 1.3]{LM09} for $\mathsf{G}=\PSL_n(\bR)$ and \cite[Proposition 2.26]{alessandrini} for general $\mathsf{G}$).  This doubling map embeds $\Hit_n(S)$ into $\Hit_{\mathsf{G}}(\widehat{S})$. One can also embed $\Mod^*(S)$ into $\Mod(\widehat{S})$ as a subgroup by sending a representative $\phi$ of $[\phi]\in \Mod^*(S)$ to its double $\widehat{\phi}$. Since $\Mod(\widehat{S})$ acts properly on $\Hit_{\mathsf{G}}(\widehat{S})$, $
\Mod^*(S)$ also acts properly on $\Hit_{\mathsf{G}}(\widehat{S})$ preserving the embedded subspace $\Hit_{\mathsf{G}}(S)$ of $\Hit_{\mathsf{G}}(\widehat{S})$.  Therefore, we have the map
\begin{equation*}
\Pi_{\mathsf{G}}^*:\Hit_{\mathsf{G}}(S) \ra \mathcal{M}_{\mathsf{G}}(S):=\Hit_{\mathsf{G}}(S)/\Mod^*(S).
\end{equation*}
Moreover, because elements of $\Mod(S)$ preserve each boundary component of $S$, each subspace $\Hit_{\mathsf{G}}(S,\mathbf{B})$ is invariant under the $\Mod(S)$-action for any choice of $\mathbf{B}$.  This allows us to consider the quotient
\[
\Pi_{\mathsf{G},\mathbf{B}}: \Hit_{\mathsf{G}}(S, \mathbf{B}) \to \mathcal{M}_{\mathsf{G}}(S, \mathbf{B}) := \Hit_{\mathsf{G}}(S, \mathbf{B}) /\Mod(S).
\]

To avoid cumbersome notations, denote $\mathcal{M}_{\PSL_n(\bR)}(S)$ and $\mathcal{M}_{\PSL_n(\bR)}(S,\mathbf{B})$ simply by $\mathcal{M}_{n}(S)$ and $\mathcal{M}_{n}(S,\mathbf{B})$. Moreover, unless it causes any confusion, we will use the notation $\Pi$ instead of $\Pi_{\mathsf{G}}$, $\Pi_{\PSL_n(\bR)}^*$, and $\Pi_{\PSL_n(\bR),\mathbf{B}}$. 

On the space of maximal representations, we have the same mapping class group action. Since maximal representations are projectively Anosov, we know that the mapping class group action on $\Max_{2n}(S)$ is proper \cite{wienhard06}. Hence, we get the moduli space $\Max_{2n}(S)/\Mod(S)$. 

We wrap up this subsection by mentioning the following general fact:
\begin{lem}\label{emptyinterior}
Let $\Gamma$ be a countable group acting properly and effectively on a connected manifold $M$. Let $U$ be any non-empty open set of $M$. Then $U$ contains an element having the trivial stabilizer. In particular, there is a non-empty open set $U'\subset U$ such that
\[
\{\gamma\in \Gamma\,|\,\gamma(U') \cap U' \ne \emptyset \} = \{1\}.
\]
\end{lem}
\begin{proof}
Since the $\Gamma$-action is proper and effective,
\[
\operatorname{Fix}(\gamma):=\{x\in M \,|\, \gamma\cdot x = x \},
\]
for $\gamma\in\Gamma\setminus\{1\}$ is nowhere dense. Therefore, 
\[
\bigcup_{\gamma\in \Gamma\setminus \{1\}} \operatorname{Fix}(\gamma)
\]
is nowhere dense. Hence, there is a point $x\in U$ that has the trivial stabilizer in $\Gamma$. The claim for the existence of $U'$ is a consequence of the slice theorem.
\end{proof}

\subsection{Symplectic structure} 
Given a oriented closed surface $S$ of genus $\ge 2$, $\Hit_{\mathsf{G}}(S)$ and $\Max_{2n}(S)$ admit $\Mod(S)$-invariant symplectic structure, called the Atiyah-Bott-Goldman symplectic form $\omega_{ABG}$. 

Suppose that $S$ is a compact oriented surface with oriented boundary components $\zeta_1, \cdots, \zeta_b$. We regard them as elements of $\pi_1(S)$ by taking paths from $x_0$ to a point of $\zeta_i$. Let $\mathbf{B}=(B_1, \cdots, B_b)$ be an ordered set of $\mathsf{G}$-conjugacy classes of loxodromic elements. As in the introduction, the \emph{relative Hitchin component} $\Hit_\mathsf{G}(S, \mathbf{B})$ with respect to the boundary holonomy $\mathbf{B}$ is given by
\[
\Hit_\mathsf{G}(S, \mathbf{B})=\{[\rho] \in \Hit_\mathsf{G}(S)\,|\, \rho(\zeta_i)\in B_i,\,i=1,2,\cdots,b\}.
\]
We equip $\Hit_\mathsf{G}(S,\mathbf{B})$ with the Guruprasad-Huebschmann-Jeffrey-Weinstein form. Note, however, that $\Hit_{\mathsf{G}}(S)$ itself does not carry a meaningful symplectic form. 

In any cases, we denote by $\vol$ the induced volume form on $\Hit_\mathsf{G}(S)$, $\Max_{2n}(S)$ or $\Hit_\mathsf{G}(S, \mathbf{B})$.

\begin{rem}
    After Goldman's work, there have been plenty of constructions for the symplectic and Poisson structures on character varieties \cite{huebschmann95,HJ,Karshon,weinstein,alekseev,boalch}. See also Section~9 of \cite{huebschmann22}. 
\end{rem}

For some split real form  semi-simple Lie group  Lie group $\mathsf{G}$, the $\mathsf{G}$-Hitchin component $\Hit_{\mathsf{G}}(S)$ can be symplectically immersed in $\Hit_d(S)$. We will not use this fact but it may be independently interesting. 

Let 
\[
J_d = \begin{pmatrix}
    0 & 0 & \cdots & 0& 1 \\
    0 & 0 & \cdots  & 1 & 0 \\
    0& 0& \iddots & 0 & 0\\
    0& 1 & \cdots &0 &0\\
    1& 0 & \cdots &0 & 0
\end{pmatrix}\text{, }\quad  W_{2n} = \begin{pmatrix}
0 & I_n \\
-I_n & 0
\end{pmatrix},\quad\text{ and }\quad W_{2n}':=\begin{pmatrix}
    0& J_n \\ -J_n & 0
\end{pmatrix}.
\]
The matrix $J_d$ has the signature $(n,n)$ for $d=2n$ and $(n+1,n)$ for $d=2n+1$. Hence we define the indefinite inner products  
\[
\langle v,w\rangle_{n+1,n}:=v^{\mathsf{T}} J_{2n+1} w,\qquad \text{and}\qquad \langle v,w\rangle_{n,n} :=  v^{\mathsf{T}} J_{2n} w
\]
respectively. On the other hand, $W_{2n}$ and $W_{2n}'$ define symplectic structures 
\[
\omega(v,w) : = v^{\mathsf{T}} W_{2n} w,\qquad \text{and}\qquad \omega'(v,w) = v^{\mathsf{T}}W'_{2n} w
\]
on $\bR^{2n}$. For a moment, we adopt the following definitions for the indefinite orthogonal groups and symplectic groups 
\begin{align}
    \SO_{n+1,n} &= \{g\in \SL_{2n+1}(\bR)\,|\,g^{\mathsf{T}} J_{2n+1}g=J_{2n+1}\} \label{eq:altdefso}\\
    \Sp_{2n}(\bR)&= \{g\in \SL_{2n}(\bR)\,|\,g^{\mathsf{T}} W'_{2n}g=W'_{2n}\} \label{eq:altdefsp}.
\end{align}

Having the above definitions on hand, we construct a natural map from $\Hit_{\mathsf{G}}(S)$ into $\Hit_d(S)$.  In what follows, let $\mathsf{G}=\PSO_{n+1,n}$ or $\PSp_{2n}(\bR)$ and $d=2n+1$ or $2n$ depending on $\mathsf{G}$. Let $\iota_{\mathsf{G},d}:\mathsf{G} \to \PSL_{d}(\bR)$ be the defining representation. Denote by the same notation 
\[
\iota_{\mathsf{G},d}:\Hit_\mathsf{G}(S)\to \Hom(\pi_1(S),\PSL_d(\bR))/\PSL_d(\bR)
\]
the induced map defined by $\iota_{\mathsf{G},d}([\rho]) = [\iota_{\mathsf{G},d} \circ \rho]$. 

\begin{lem}\label{lem:smoothembedding}
Let $\mathsf{G}=\PSO_{n+1,n}$ or $\PSp_{2n}(\bR)$. For any $[\rho]\in \Hit_\mathsf{G}(S)$, we have $\iota_{\mathsf{G},d}([\rho])\in \Hit_d(S)$. Hence $\iota_{\mathsf{G},d}$ induces the well-defined smooth map $\iota_{\mathsf{G},d}: \Hit_\mathsf{G}(S)\to \Hit_d(S)$.
\end{lem}
\begin{proof}
The smoothness of $\iota_{\mathsf{G},d}$ follows from the fact that $\iota:\mathsf{G}\to \PSL_d(\bR)$ is smooth and that $\Hit_d(S)$ and $\Hit_{\mathsf{G}}(S)$ are smooth manifolds \cite{hitchin92,fock}.

We claim that $\iota_{\mathsf{G},d}\circ \tau_{\mathsf{G}}=\tau_{\PSL_d(\bR)}$, where $\tau_{\mathsf{G}}$ and $\tau_{\PSL_d(\bR)}$ are preferred representations defined in Section~\ref{sec:notation}.  To see this, we need to show that a principal TDS of $\mathfrak{so}_{n+1,n}$ (or of $\mathfrak{sp}_{2n}(\bR)$) is also a principal TDS of $\mathfrak{sl}_{2n+1}(\bR)$ (or of $\mathfrak{sl}_{2n}(\bR)$, respectively). These are consequences of our definitions of $\PSO_{n+1,n}$ and $\PSp_{2n}(\bR)$ given in (\ref{eq:altdefso}), (\ref{eq:altdefsp}). For instance, in the case of $\mathsf{G}=\PSO_{n+1,n}$, we may choose 
\begin{align*}
E&=\sum_{k=1}^n  k (E_{k,k+1}-E_{2n+1-k,2n+2-k}),\\
F&=\sum_{k=1}^n (2n+1-k)(E_{k+1,k}-E_{2n+2-k,2n+1-k}),\\
H&= \sum_{k=1}^n (2n+2-2k)(E_{k,k}-E_{2n+2-k,2n+2-k}),
\end{align*}
in $\mathfrak{so}_{n+1,n}$ and if $\mathsf{G}=\PSp_{2n}(\bR)$, we choose
\begin{align*}
E&=\sum_{k=1}^{n-1}  k (E_{k,k+1}-E_{2n-k,2n+1-k})+nE_{n,n+1},\\
F&=\sum_{k=1}^{n-1} (2n+1-k)(E_{k+1,k}-E_{2n+1-k,2n-k})+(n+1)E_{n+1,n},\\
H&= \sum_{k=1}^n (2n+2-2k)(E_{k,k}-E_{2n+1-k,2n+1-k})
\end{align*}
in $\mathfrak{sp}_{2n}(\bR)$, where $E_{i,j}$ is a matrix (of suitable size) whose $(i,j)$ entry equals 1 and all other entries are 0. Proving that they are both principal TDSs of $\mathfrak{sl}_d(\bR)$ is a straightforward computation. 

    Let $[\rho]\in\Hit_{\mathsf{G}}(S)$ be given. We can find a continuous path $[\rho_t]$ in $\Hit_{\mathsf{G}}(S)$ such that $[\rho_0]=[\rho]$ and $[\rho_1]=[\tau_{\mathsf{G}}\circ \rho_F]$ for some Fuchsian $[\rho_F]$. By the above observation, $[\iota_{\mathsf{G},d} \circ \rho_t]$ is a path from $[\iota_{\mathsf{G},d}\circ\rho]$ to $[\tau_{\PSL_d(\bR)}\circ\rho_F]$ in $\Hom(\pi_1(S),\PSL_d(\bR))/\PSL_d(\bR)$. By definition, we know that $\iota_{\mathsf{G},d}([\rho])=[\iota_{\mathsf{G},d}\circ\rho]\in \Hit_d(S)$. 
\end{proof}

We saw in Lemma~\ref{lem:smoothembedding} that the defining representation $\mathsf{G}\to \PSL_d(\bR)$ sometimes induces the smooth map $\Hit_{\mathsf{G}}(S)\to \Hit_d(S)$. We claim that this map is better than just a smooth map. 

\begin{lem}\label{lem:embedding}
Let $\mathsf{G}= \PSO_{n+1,n}$ or $\PSp_{2n}(\bR)$.  The map $\iota_{\mathsf{G},d}: \Hit_\mathsf{G}(S)\to \Hit_d(S)$ given in Lemma~\ref{lem:smoothembedding} preserves the symplectic structures and is an immersion.
\end{lem}
\begin{proof}
Recall that the Atiyah-Bott-Goldman symplectic forms are defined pointwise by the cup product on the tangent spaces $H^1(\pi_1(S); \mathfrak{g}_\rho)$ and $H^1(\pi_1(S);\mathfrak{sl}_d(\bR)_{\iota_{\mathsf{G},d} \circ \rho})$. By naturality of the cup product, we know that $\iota_{\mathsf{G},d}$  preserves the Atiyah-Bott Goldman symplectic forms. 

Now we show that $\iota_{\mathsf{G},d}$ is an immersion. Let $\mathfrak{g}_\rho$ be the Lie algebra of $\mathsf{G}$ regarded as a $\pi_1(S)$-module via the adjoint action, i.e., $\gamma\cdot X = \operatorname{Ad}_{\rho(\gamma)}(X)$. Since the tangent space of $\Hit_\mathsf{G}(S)$ at $[\rho]$ can be identified with the twisted group cohomology $H^1(\pi_1(S);\mathfrak{g}_\rho)$, we need to show that the induced map
\[
\operatorname{d}\iota_{\mathsf{G},d}: H^1(\pi_1(S);\mathfrak{g}_\rho) \to H^1(\pi_1(S); \mathfrak{sl}_d(\bR)_{\iota_{\mathsf{G},d} \circ \rho})
\]
between the tangent spaces is injective for all $[\rho]\in \Hit_\mathsf{G}(S)$. This is also equivalent to showing that $H^0(\pi_1(S);\mathfrak{sl}_{d}(\bR)_{\iota_{\mathsf{G},d} \rho}/\mathfrak{g}_\rho)=0$. 

We only deal with the $\mathsf{G}=\PSO_{n+1,n}$ case because the same argument applies to the  $\mathsf{G}=\PSp_{2n}(\bR)$ case. Suppose that there is $X\in \mathfrak{sl}_{d}(\bR)_{\iota_{\mathsf{G},d}\circ \rho}$ such that  $\gamma\cdot X - X \in \mathfrak{g}$ for all $\gamma\in \pi_1(S)$. Note that 
\[
\mathfrak{g} = \{ X\in \mathfrak{sl}_d(\bR)\,|\,X^* = -X\},
\]
where  $X^*:= J_d X^{\mathsf{T}} J_d$. Thus, we have that $(\gamma \cdot X - X )^* = - \gamma \cdot X +X $. This implies  $\gamma\cdot (X+X^*) = X+X^*$ for all $\gamma\in \pi_1(S)$. Since $\iota_{\mathsf{G},d}\circ \rho$ is Hitchin, we have $H^0(\pi_1(S);\mathfrak{sl}_{d}(\bR)_{\iota_{\mathsf{G},d}\circ \rho})=0$. Thus, we have $X=-X^*$. It follows that $X\in \mathfrak{g}$ as we wanted. 
\end{proof}

\section{Goldman flows}\label{sec:goldmanflow}
In this section, we discuss decompositions of the Hitchin and maximal components induced from a splitting of the underlying surface. We also recall algebraic bending, Goldman flow and bulging flow.

We say a simple closed curve is \emph{essential} if it is neither freely homotopic to a point nor a boundary component. Given an oriented closed curve $\alpha$, we use the notation $|\alpha|$ to denote the unoriented closed curve as a subset of $S$ forgetting the orientation of $\alpha$. 

\subsection{Decomposition of  Hitchin and maximal components}\label{sec:decomposition}
Throughout this subsection,  $S$ is a compact orientable surface of negative Euler characteristic not homeomorphic to a sphere with three boundary components, and $\alpha$ is an oriented essential simple closed curve in $S$. 

We decompose the fundamental group $\pi_1(S)$ in terms of an amalgamated product or a HNN extension depending on whether $\alpha$ is separating or not. We also describe the effect of the decomposition of the fundamental group on the Hitchin components. 

\subsubsection{When $\alpha$ is non-separating} \label{fungpdecomnonsep}
In this case, $S\setminus |\alpha|$ is connected. Choose a tubular neighborhood $\nu(|\alpha|)$ of $|\alpha|$ and fix an orientation of $S$. Let $S_1=S\setminus \nu(|\alpha|)$. We have the natural inclusion map $j:S_1\to S$.

For each point $x$ in the image of $\alpha$, we have a tangent vector $t_x$ of $\alpha$. We choose a normal vector $n_x$ such that $(t_x, n_x)$ forms a positive basis of $T_xS$. Since $S$ is oriented, $\nu(|\alpha|)\setminus|\alpha|$ consists of two connected components. We denote by $C_+$ a connected component of $\nu(|\alpha|)\setminus|\alpha|$ to which $n_x$ points inward. Let $C_-$ be the other connected component of  $\nu(|\alpha|)\setminus|\alpha|$. The surface $S_1$ has two distinguished boundary components, say $\zeta^+=S_1\cap \overline{C_+}$, $\zeta^-=S_1\cap \overline{C_-}$, where $\overline{C_\pm}$ are the closures of $C_\pm$ in $S$.

Fix a base-point $x_1\in \zeta^+\subset S_1$ of $S_1$. Then the map $j:S_1\to S$ induces a homomorphism $j_\sharp:\pi_1(S_1, x_1) \to \pi_1(S,x_0)$.  We can find elements $\alpha_{x_0}\in \pi_1(S, x_0)$, and  $\zeta^\pm _{x_1}\in \pi_1(S_1, x_1)$ that are freely homotopic to $\alpha$, and $\zeta^\pm$ respectively, such that 
\[
\alpha_{x_0} = j_{\sharp}(\zeta^+_{x_1}),\quad \text{and}\quad v^{-1} j_\sharp(\zeta^-_{x_1})v = j_\sharp(\zeta^+_{x_1})
\]
for some $v\in \pi_1(S,x_0)$.  From now on, we write $\pi_1(S)$ for $\pi_1(S,x_0)$, $\pi_1(S_1)$ for $\pi_1(S_1, x_1)$ provided $x_0$ and $x_1$ are understood. Consider the HNN-extension
\[
\pi_1(S_1) \ast_{\langle\zeta^+ _{x_1}\rangle,\langle\zeta^- _{x_1}\rangle}:=(\pi_1(S_1)\ast \langle v\rangle)/R
\]
for a new generator $v$ and the normal subgroup $R$ generated by $v\zeta^+ _{x_1} v^{-1}(\zeta^- _{x_1})^{-1}$. Under these choices and conventions, we have an isomorphism 
\[
\mathfrak{j} : \pi_1(S_1)*_{\langle\zeta^+ _{x_1}\rangle,\langle\zeta^- _{x_1}\rangle}\to \pi_1(S).
\]
This isomorphism, in particular, maps the $\pi_1(S_1)$ factor to a subgroup $j_{\sharp}(\pi_1(S_1,x_1))$ of $\pi_1(S,x_0)$

For a split real form $\mathsf{G}$, we have the restriction map
\[ {\widetilde{\mathcal{R}}}_1 : \Hom_{\mathrm{H}}(\pi_1(S), \mathsf{G}) \ra \Hom_{\mathrm{H}}(\pi_1(S_1), \mathsf{G})\]
defined by sending $\rho$ to $\rho| j_\sharp(\pi_1(S_1))$. By the universal property of the HNN extension, the map $\widetilde{\mathcal{R}}_1$ has the image
\[
\widetilde{\Delta}_\alpha = \{\rho\,|\, \rho(\zeta^+_{x_1}) \text{ and } \rho(\zeta^-_{x_1})\text{ are conjugate in } \mathsf{G}\}
\]
regarded as a subspace of $\Hom_{\mathrm{H}}(\pi_1(S_1),\mathsf{G})$.

Note that $\widetilde{\mathcal{R}}_1$  commutes with the $\mathsf{G}$-conjugation actions, inducing the continuous map
\[
\mathcal{R}_1: \Hit_\mathsf{G}(S) \to \Delta_\alpha \subset\Hit_\mathsf{G}(S_1)
\]
where
\[\Delta_\alpha:=\{ [\rho]\in \Hit_\mathsf{G}(S_1)\,|\,[\rho(\zeta^+_{x_1})]=[\rho(\zeta^-_{x_1})]\}. \]

If we fix the boundary holonomy $\mathbf{B}=(B_1,\cdots, B_b)$, we also have the same restriction map $\mathcal{R}_{1}:\Hit_\mathsf{G}(S, \mathbf{B}) \to \Delta_\alpha(\mathbf{B})$, where
\[
\Delta_\alpha(\mathbf{B}) = \{[\rho]\in \Hit_\mathsf{G}(S_1)\,|\,[\rho(\zeta_{x_1}^+)]=[\rho(\zeta_{x_1}^-)],\,\rho(\zeta_i)\in B_i\}.
\]

For the maximal representation case, first recall that the Toledo invariant remains the same if we cut the surface along a non-separating simple closed curve. Therefore, for a given maximal representation $\rho\in \Hom_{\mathrm{M}}(\pi_1(S),\Sp_{2n}(\bR))$ and a non-separating essential simple closed curve $\alpha$,   $\widetilde{\mathcal{R}}_1 (\rho)\in \Hom(\pi_1(S_1),\Sp_{2n}(\bR))$ is a maximal representation. This yields the well-defined continuous map
\[
    \mathcal{R}_1: \Max_{2n}(S) \to \Max_{2n}(S_1),
\]
whose image is $\{ [\rho]\in \Max_{2n}(S_1)\,|\,[\rho(\zeta^+_{x_1})]=[\rho(\zeta^-_{x_1})]\}$.

\subsubsection{When $\alpha$ is separating}\label{fungpdecomsep}
In this case, $S\setminus|\alpha|$ consists of two components. We choose again a tabular neighborhood $\nu(|\alpha|)$ and let $S_1$ and $S_2$ be two connected components of $S\setminus \nu(|\alpha|)$. Let $j_i: S_i \to S$, $i=1,2$, be the natural inclusion maps. 

As before, we denote by $C_+$ the connected component of $\nu(|\alpha|)\setminus|\alpha|$ to which a normal vector points inward. We may assume that $S_1$ is the one that contains $C_+$. Let $\zeta^{(1)}=S_1\cap \overline{C_+}$ and $\zeta^{(2)} = S_2\cap \overline{C_-}$.

We choose a point $x_i\in \zeta^{(i)}$  and use it as a base-point for $S_i$ for each $i=1,2$.  By taking paths joining $x_0$ and $x_i$ for each $i=1,2$, we obtain injective homomorphisms $(j_i)_\sharp:\pi_1(S_i,x_i)\to \pi_1(S,x_0)$. Choose $\alpha_{x_0}\in \pi_1(S,x_0)$, $\zeta^{(1)}_{x_1}\in \pi_1(S_1,x_1)$ and $\zeta^{(2)}_{x_2}\in \pi_1(S_2,x_2)$ freely homotopic to $\alpha$, $\zeta^{(1)}$ and $\zeta^{(2)}$ respectively such that 
\[
\alpha_{x_0} = (j_1)_\sharp (\zeta^{(1)}_{x_1})=(j_2)_\sharp (\zeta_{x_2}^{(2)}). 
\]
Let
\[
\pi_1(S_1) \ast_{\langle \zeta^{(1)} _{x_1}\rangle,\langle\zeta^{(2)}_{x_2}\rangle}  \pi_1(S_2)
\]
be the amalgamated product with amalgam $\zeta^{(1)}_{x_1}=\zeta^{(2)}_{x_2}$. Under these identifications, there is an isomorphism
\[
\mathfrak{j}: \pi_1(S_1) \ast_{\langle\zeta^{(1)} _{x_1}\rangle,\langle\zeta^{(2)}_{x_2}\rangle}  \pi_1(S_2)\to \pi_1(S).
\]
Note that $\mathfrak{j}$ sends each $\pi_1(S_i)$ to the subgroup $(j_i)_\sharp(\pi_1(S_i,x_i))$ of $\pi_1(S,x_0)$.

As in the non-separating case, we have restriction maps
\[ {\widetilde{\mathcal{R}}}_i : \Hom_{\mathrm{H}}(\pi_1(S), \mathsf{G}) \ra \Hom_{\mathrm{H}}(\pi_1(S_i), \mathsf{G}),\qquad i=1, 2\]
defined by sending $\rho$ to $\rho| (j_i)_\sharp(\pi_1(S_i))$. The product map
\[
\widetilde{\mathcal{R}}_1\times \widetilde{\mathcal{R}}_2: \Hom_{\mathrm{H}}(\pi_1(S),\mathsf{G}) \to \widetilde{\Delta}_\alpha,
\]
is a homeomorphism, where
\[
\widetilde{\Delta}_\alpha = \{(\rho_1,\rho_2)\,|\, \rho_1(\zeta^{(1)} _{x_1}) = \rho_2(\zeta^{(2)}_{x_2})\}
\]
regarded as a subspace of 
\[
\Hom_{\mathrm{H}}(\pi_1(S_1),\mathsf{G})\times \Hom_{\mathrm{H}}(\pi_1(S_2),\mathsf{G}).
\]
The inverse of the map $\widetilde{\mathcal{R}}_1\times \widetilde{\mathcal{R}}_2$ is given simply by gluing two representations $(\rho_1,\rho_2)\in\widetilde{\Delta}_\alpha$ along $\alpha_{x_0}$ by using the universal property of the amalgamated product.

Since $\widetilde{\mathcal{R}}_1$ and $\widetilde{\mathcal{R}}_2$ commute with the conjugation action, inducing the continuous maps
\[
{\mathcal{R}}_1\times {\mathcal{R}}_2 : \Hit_\mathsf{G}(S) \to \Delta_\alpha \subset\Hit_\mathsf{G}(S_1) \times \Hit_\mathsf{G}(S_2)
\]
where
\[\Delta_\alpha:=\{ ([\rho_1], [\rho_2])\in \Hit_\mathsf{G}(S_1) \times \Hit_\mathsf{G}(S_2)\,|\, [\rho_1(\zeta^{(1)}_{x_1})]
=[\rho_2(\zeta^{(2)} _{x_2})]\}. \]

Suppose that we fix boundary holonomy $\mathbf{B}=(B_1, \cdots, B_b)$. Assume that boundary components $\zeta_1, \cdots, \zeta_{b_1}$ are contained in $S_1$ and $\zeta_{b_1+1},\cdots, \zeta_b$ are contained in $S_2$. Define
\begin{multline*}
\Delta_{\alpha}(\mathbf{B})=\{ ([\rho_1], [\rho_2])\in \Hit_\mathsf{G}(S_1) \times \Hit_\mathsf{G}(S_2)\,|\, [\rho_1(\zeta^{(1)}_{x_1})]=[\rho_2(\zeta^{(2)} _{x_2})],\\
\rho_1(\zeta_1)\in B_1,\cdots,\rho_1(\zeta_{b_1})\in B_{b_1},\,\rho_2(\zeta_{b_1+1})\in B_{b_1+1},\cdots,\rho_2(\zeta_b)\in B_b\}. 
\end{multline*}
Then we have the same restriction map $\mathcal{R}_{1}\times \mathcal{R}_{2}: \Hit_\mathsf{G}(S,\mathbf{B}) \to \Delta_\alpha (\mathbf{B})$.

Now we discuss the maximal representations. Recall that the Toledo invariant is additive under the boundary sum. Thus, for a given maximal representation $\rho\in \Hom_{\mathrm{M}}(\pi_1(S),\Sp_{2n}(\bR))$ and a separating essential simple closed curve $\alpha$,   $\widetilde{\mathcal{R}}_i(\rho)\in \Hom(\pi_1(S_i), \Sp_{2n}(\bR))$, $i=1,2$ are all maximal representations. Therefore, we have the well-defined map
\[
\mathcal{R}_1\times\mathcal{R}_2: \Max_{2n}(S) \to \Max_{2n}(S_1)\times \Max_{2n}(S_2).
\]
The image of this map is given by  
\[
\{ ([\rho_1], [\rho_2])\in \Max_{2n}(S_1) \times \Max_{2n}(S_2)\,|\, [\rho_1(\zeta^{(1)}_{x_1})]
=[\rho_2(\zeta^{(2)} _{x_2})]\}.
\]

\subsection{Algebraic bending}\label{sec:bending} In the Teichm\"uller space, we have the twisting deformation along an essential simple closed curve. In general  representation varieties, algebraic bending plays the role of the twisting deformation.

Section~5 of Johnson-Millson \cite{JM87} is a general reference for this section. 

\subsubsection{Normalized representations}
To define algebraic bending, we introduce a technical notion that allows us to lift $\Hit_\mathsf{G}(S)$ into $\Hom_{\mathrm{H}}(\pi_1(S), \mathsf{G})$. 

We continue to use the notations from Sections~\ref{fungpdecomnonsep} and ~\ref{fungpdecomsep}. Suppose that $\alpha$ is non-separating. Since $\mathfrak{j}(\pi_1(S_1))\subset \pi_1(S)$ is a quasi-convex subgroup, we can regard $\partial_\infty \pi_1(S_1)$ as a subspace of $\partial_\infty \pi_1(S)$. Fix an orientation on $\partial_\infty \pi_1(S)$. We may assume that for any $x\in \partial_\infty\pi_1(S_1)$, the triple $(\alpha_{x_0}^+,x,\alpha_{x_0}^-)$ is positively ordered.  We choose and fix a point $z\ne\alpha_{x_0}^\pm$ in $\partial_\infty \pi_1(S_1)\subset \partial_\infty \pi_1(S)$. When $\alpha$ is separating, we  assume that the triple $(\alpha_{x_0}^+,x,\alpha_{x_0}^-)$ is positively ordered  for any $x\in \partial_\infty\pi_1(S_1)$. We then choose  $z\ne\alpha_{x_0}^\pm$ in $\partial_\infty \pi_1(S_1)$.  For such a fixed choice of $z$,  a $\mathsf{G}$-Hitchin representation $\rho:\pi_1(S) \to \mathsf{G}$ is  $(\alpha_{x_0},z)$-\emph{normalized} if the positive limit map $\xi_\rho:\partial_\infty\pi_1(S)\to \mathcal{F}_\Theta$ satisfies 
\[
(\xi_\rho (\alpha_{x_0}^+), \xi_\rho (z), \xi_\rho (\alpha_{x_0}^-))\in U_\mathsf{G}.
\]
See Lemma~\ref{lem:normalization} for the definitions of $U_\mathsf{G}$. Indeed, Lemma~\ref{lem:normalization} shows that, for a fixed pair $(\alpha_{x_0},z)$,  the set of $(\alpha_{x_0},z)$-normalized $\Theta$-positive representations in a given conjugacy class is compact.

We also introduce normalized maximal representations. We say a maximal representation is $(\alpha_{x_0}, z)$-\emph{normalized} if its limit map $\xi_\rho:\partial_\infty \partial \pi_1(S) \to \mathcal{L}$ satisfies
    \begin{itemize}
         \item[(i)] $( \xi_\rho (\alpha_{x_0}^+), \xi_\rho (z),\xi_\rho(\alpha_{x_0}^-))=  (\mathcal{L}_O, \mathcal{L}_A, \mathcal{L}_\infty)$  for some $A\in U_M$.
         \item[(ii)] $g\rho(\gamma)g^{-1}$ is of the form
         \begin{equation}\label{eq:standard}         
         \begin{pmatrix}
             \mathcal{J} & O \\ O & (\mathcal{J}^{\mathsf{T}})^{-1}
         \end{pmatrix}
         \end{equation}
         for some  ordered real Jordan form $\mathcal{J}\in \mathsf{GL}_n(\bR)$.
    \end{itemize}
Lemma~\ref{lem:normalizationmax} asserts that  the set of $(\alpha_{x_0}, z)$-normalized representations in a given conjugacy class is compact. 

\subsubsection{Algebraic bending}
Suppose that $\alpha$ is non-separating. Consider the isomorphism 
\[
\mathfrak{j}: \pi_1(S_1)*_{\langle\zeta^+_{x_1}\rangle,\langle\zeta^- _{x_1}\rangle}\to \pi_1(S)
\]
defined in Section~\ref{fungpdecomnonsep}. Let $\rho$ be a  $(\alpha_{x_0},z)$-normalized representation and let 
\[
Z(\rho(\alpha_{x_0})):=\{g\in \mathsf{G}\,|\,g\rho(\alpha_{x_0})g^{-1} = \rho(\alpha_{x_0})\}
\]
be the centralizer of $\rho(\alpha_{x_0})$. We are mostly interested in the case when $\rho(\alpha_{x_0})$ sits inside $\exp (\mathfrak{a})$. If this happens, the identity component of $Z(\rho(\alpha_{x_0}))$ equals $\exp \mathfrak{a}$.

For a given $Q\in \mathfrak{a}$ and a $(\alpha_{x_0},z)$-normalized representation $\rho$, define the set map on $\pi_1(S)\cup \{v\}$ by
\[
\gamma\mapsto \begin{cases}
     \rho(\mathfrak{j}(\gamma)) & \text{ for } \gamma  \in \pi_1(S_1)\\
    \rho(\mathfrak{j}(v))  \exp(Q) &  \text{ for } \gamma =v
\end{cases}
\]
which induces a well-defined new representation 
\[
\widetilde{\Phi}^{Q,\alpha_{x_0}}(\rho): \pi_1(S_1)*_{\langle\zeta^+_{x_1}\rangle,\langle\zeta^- _{x_1}\rangle}\to \mathsf{G}.
\]
By pre-composing $\mathfrak{j} ^{-1}$, we have a representation 
\[
\pi_1(S) \overset{\mathfrak{j}^{-1}}{\to} \pi_1(S_1)*_{\langle\zeta^+_{x_1}\rangle,\langle\zeta^- _{x_1}\rangle}\to \mathsf{G}
\]
denoted by the same notation $\widetilde{\Phi}^{Q,\alpha_{x_0}}(\rho)$. 

By the same fashion, we define the algebraic bending for a separating curve $\alpha$. First, recall that we have the isomorphism 
\[
\mathfrak{j}: \pi_1(S_1) \ast_{\langle\zeta^{(1)} _{x_1}\rangle,\langle\zeta^{(2)}_{x_2}\rangle}  \pi_1(S_2)\to \pi_1(S)
\]
defined in Section~\ref{fungpdecomsep}. For a given $(\alpha_{x_0},z)$-normalized representation $\rho$, define the set map on $\pi_1(S_1)\cup \pi_1(S_2)$ by
\[
\gamma\mapsto\begin{cases}
     \rho(\mathfrak{j}(\gamma)) & \text{ for } \gamma  \in \pi_1(S_1)\\
     (\exp Q ) \rho(\mathfrak{j}(\gamma))\exp (-Q) & \text{ for } \gamma \in \pi_1(S_2)
\end{cases}
\]
which extends to a well-defined representation 
\[
\widetilde{\Phi}^{Q,\alpha_{x_0}}(\rho): \pi_1(S_1) \ast_{\langle\zeta^{(1)} _{x_1}\rangle,\langle\zeta^{(2)}_{x_2}\rangle}  \pi_1(S_2)\to \mathsf{G}.
\]
Precomposing  $\mathfrak{j} ^{-1}$ yields a new representation $\pi_1(S) \to \mathsf{G}$ denoted by the same notation $\widetilde{\Phi}^{Q,\alpha_{x_0}}(\rho)$.

The new representation  $\widetilde{\Phi}^{Q,\alpha_{x_0}}(\rho)$ does not depend on the choice of the isomorphism $\mathfrak{j}$. We also remark that if $\rho$ is $(\alpha_{x_0},z)$-normalized, then $\widetilde{\Phi}^{Q,\alpha_{x_0}}(\rho)$ is also $(\alpha_{x_0},z)$-normalized.

The map $\widetilde{\Phi}^{Q,\alpha_{x_0}}$ descends to the $\mathsf{G}$-Hitchin component as a diffeomorphism in the following sense. Let $Q\in \mathfrak{a}$ and define $\Phi^{Q,\alpha_{x_0}}: \Hit_\mathsf{G}(S) \ra \Hit_\mathsf{G}(S)$ by
\[
\Phi^{Q,\alpha_{x_0}}([\rho]) := [\widetilde{\Phi}^{Q,\alpha_{x_0}}(\rho')]
\]
where $\rho'$ is a $(\alpha_{x_0}, z)$-normalized representation in the class $[\rho]$.  The last assertion of Lemma~\ref{lem:normalization} implies that $\Phi^{Q,\alpha_{x_0}}$ does not depend on the choice of point $z\in \partial_\infty\pi_1(S_1)$ nor the choice of $(\alpha_{x_0},z)$-normalized representative in $[\rho]$. Note also that $\Phi^{Q,\alpha_{x_0}}$ preserves subspaces $\Hit_\mathsf{G}(S,\mathbf{B})$ for any choice of $\mathbf{B}$.

The maximal representation case is more subtle. Let $Q\in \mathfrak{a}$ and let 
\[
\Max_{2n} ^{+,\alpha}(S):=\{[\rho]\in \Max_{2n} ^+(S)\,|\, \rho(\alpha)\text{ is loxodromic}\}.
\]
We define the smooth map on $\Max_{2n} ^{+,\alpha}(S)$ by
\[
\Phi^{Q,\alpha}([\rho]) := [\widetilde{\Phi}^{Q,\alpha_{x_0}}(\rho')]
\]
where $\rho'$ is a $(\alpha_{x_0},z)$-normalized maximal representation in the class $[\rho]$. We remark that this map is not well-define on the whole space $\Max_{2n} ^+(S)$.

\begin{rem}\label{conjugacyinvariance}
    Let $\beta$ and $\gamma$ be two conjugate elements of $\pi_1(S,x_0)$ represented by simple closed curves.  Then $\Phi^{Q,\beta} = \Phi^{Q,\gamma}$ on $\Hit_\mathsf{G}(S)$ and $\Max_{2n}^+(S)$. Hence we may drop $x_0$ from $\alpha_{x_0}$ when we consider the action on $\Hit_\mathsf{G}(S)$ and $\Max_{2n}^+(S)$. However, the action of  $\Phi^{Q,\alpha}$ still depends on the orientation of $\alpha$.
\end{rem}
\subsubsection{Dehn twists and algebraic bending}
The action of the Dehn twists on $\Hit_\mathsf{G}(S)$ can be described in terms of algebraic bending. By convention, Dehn twists always mean left Dehn twists. 

We first treat the Dehn twist $W_{\alpha}$ along a non-separating curve $\alpha$. The induced automorphism $(W_\alpha)_\sharp$ on $\pi_1(S)$ by $W_\alpha$ (up to inner automorphism) is given as follows. We first recall the isomorphism 
\[
\mathfrak{j}:\pi_1(S_0)*_{\langle\zeta^+_{x_1}\rangle,\langle\zeta^- _{x_1}\rangle} \to \pi_1(S)
\]
defined in Section~\ref{fungpdecomnonsep}. By the definition of the Dehn twist, we know that
\[
(W_\alpha)_\sharp(\gamma) = \begin{cases}
    \gamma & \text{ for }\gamma\in \mathfrak{j}(\pi_1(S_0))\\
    \mathfrak{j}(v) \alpha_{x_0} & \text{ for } \gamma =\mathfrak{j}(v)
\end{cases}.
\]
For any $(\alpha_{x_0},z)$-normalized  representation $\rho$, we compute
\[
(W_\alpha)_* (\rho)(\gamma) = \rho ((W_\alpha)_\sharp ^{-1}(\gamma))=\begin{cases}
    \rho(\gamma) & \text{ for }\gamma\in \mathfrak{j}(\pi_1(S_0))\\
    \rho(\mathfrak{j}(v))\rho( \alpha_{x_0})^{-1} & \text{ for } \gamma =\mathfrak{j}(v)
\end{cases}.
\]
The latter is precisely the definition of $\widetilde{\Phi}^{\rho(\alpha_{x_0})^{-1},\alpha_{x_0}}(\rho)$. On the level of $\Hit_\mathsf{G}(S)$, we may write 
\[
(W_\alpha)_* ([\rho]) = \Phi^{T([\rho]),\alpha}([\rho]),
\]
where the function $T$ from $\Hit_\mathsf{G}(S)$ to the group $\mathfrak{a}$ is given by the Jordan projection $\lambda:\mathsf{G}\to \overline{\mathfrak{a}^+}$, i.e., $T([\rho]) = - \lambda(\rho(\alpha_{x_0}))$. In particular, when $\mathsf{G}=\PSL_n(\bR)$, we have 
\[
T([\rho]) = \operatorname{diag}(-\log \lambda_1(\rho(\alpha_{x_0})), -\log \lambda_2(\rho(\alpha_{x_0})), \cdots, -\log \lambda_n(\rho(\alpha_{x_0}))).
\]
Here, $\lambda_i(\rho(\alpha_{x_0}))$ is the $i$th largest eigenvalue of $\rho(\alpha_{x_0})$.

When $\alpha$ is separating, the same argument proves that $(W_\alpha)_* ([\rho]) = \Phi^{T([\rho]),\alpha}([\rho])$ where $T$ is the same function defined above. 

The effect of the Dehn twist on the space of maximal representions can also be interpreted as the algebraic bending. Again, we restrict to the space $\Max_{2n}^{+,\alpha}(S)$ to make our discussion easier. Under this restriction,  the Dehn twist $W_\alpha$ induces the well-defined map
\[
(W_\alpha)_*:[\rho]\mapsto \Phi^{T([\rho]), \alpha}([\rho])
\]
on $\Max_{2n}^{+,\alpha}(S)$, where $T$ is the same map as before. 

\subsubsection{Properness of Algebraic Bending}
It is also well-known that algebraic bending is proper, at least on Hitchin components. One can find a proof for this, for instance, in \cite{CJK20}. The following result covers the maximal representation case as well.

\begin{prop}\label{prop:proper} Let $S$ be a compact orientable surface with negative Euler characteristic. Let $\alpha$ be an essential simple closed curve in $S$. Given any compact subset $N$ of $\Hit_{\mathsf{G}}(S)$, there is a constant $C_N$ such that  $\Phi^{Q,\alpha}([\rho]) \notin N$ for any $[\rho]\in N$ and $Q\in \mathfrak{a}$ with $\|Q\|>C_N$. The same conclusion holds for a closed surface $S$ and a compact set $N$ in $\Max^{+,\alpha} _{2n}(S)$.
\end{prop} 
\begin{proof} 
The proof goes in parallel for both  $\Hit_{\mathsf{G}}(S)$ and $\Max_{2n}^{+,\alpha}(S)$ cases. Hence we just prove for the maximal component case.

Recall that we have fixed $z\in \partial_\infty \pi_1(S_1)$, $z\ne \alpha_{x_0} ^\pm$. Let 
\[
\mathfrak{M}=\{\rho\in \Hom_{\mathrm{M}}^+(\pi_1(S),\Sp_{2n}(\bR))\,|\, \rho\text{ is }(\alpha_{x_0},z)\text{-normalized}\}.
\]
For any $\rho\in \mathfrak{M}$ and $Q\in \mathfrak{a}$, we have $\widetilde{\Phi}^{Q,\alpha_{x_0}}(\rho)\in \mathfrak{M}$. Also, by Lemma~\ref{lem:normalizationmax}, 
\[
\widetilde{N}=\{\rho\in \mathfrak{M}\,|\,[\rho]\in N\}
\]
is compact. 

If there were no such a constant $C_N$, there would be a sequence $\rho_i\in \widetilde{N}$ and $Q_i\in\mathfrak{a}$ such that $\Phi^{Q_i,\alpha_{x_0}}(\rho_i)\in \widetilde{N}$ and $\|Q_i\|\to \infty$ as $i\to \infty$.  

Suppose first that $\alpha$ is non-separating. Recall  that the restriction maps $\widetilde{\mathcal{R}}_1 :\mathfrak{M} \to \Hom_{\mathrm{M}}(\pi_1(S_1) , \Sp_{2n}(\bR))$ is continuous. Moreover, the restriction map $\widetilde{\mathcal{R}}_v : \rho\mapsto\rho( \mathfrak{j}(v))$ is also a continuous map from $\widetilde{N}$ to $\Sp_{2n}(\bR)$. From the definition of $\widetilde{\Phi}^{Q_i,\alpha_{x_0}}$, we know that 
\begin{align}
\widetilde{\mathcal{R}}_1\circ \widetilde{ \Phi}^{Q_i,\alpha_{x_0}} &= \widetilde{\mathcal{R}}_1 \label{R1action}\\
\widetilde{\mathcal{R}}_v\circ \widetilde{\Phi}^{Q_i,\alpha_{x_0}}(\rho_i)&= \widetilde{R}_v(\rho_i) \exp (Q_i)\label{R2action}.
\end{align}
By (\ref{R2action}) and  the assumption $\| Q_i \| > i$, we know that 
\[
\{ Q \,|\,\widetilde{\mathcal{R}}_v (\widetilde{N})\exp Q \cap \widetilde{\mathcal{R}}_v (\widetilde{N}) \ne \emptyset\}
\]
is a unbounded subset of $\mathsf{G}$. Since $\widetilde{R}_v(\widetilde{N})$ is compact, this violates the fact that the right multiplication action on $\Sp_{2n}(\bR)$ is proper.

Now we handle the case when $\alpha$ is separating. Recall that the restriction maps $\widetilde{\mathcal{R}}_i :\widetilde{N}\to \Hom_{\mathrm{M}}(\pi_1(S_i) , \Sp_{2n}(\bR))$ are continuous. Moreover, from the definition of $\widetilde{\Phi}^{Q_i,\alpha_{x_0}}$, we know that 
\begin{align}
\widetilde{\mathcal{R}}_1\circ  \widetilde{\Phi}^{Q_i,\alpha_{x_0}} &= \widetilde{\mathcal{R}}_1 \label{R1actionsep}\\
\widetilde{\mathcal{R}}_2 \circ  \widetilde{\Phi}^{Q_i,\alpha_{x_0}}&= \Ad_{\exp Q_i}\circ \widetilde{\mathcal{R}}_2. \label{R2actionsep}
\end{align}
By (\ref{R2actionsep}) we know that
\[
\{ Q\,|\,\Ad_{\exp Q}(\widetilde{\mathcal{R}}_2 (\widetilde{N}))\cap \widetilde{\mathcal{R}}_2 (\widetilde{N}) \ne \emptyset\}
\]
is unbounded.  Since $\widetilde{\mathcal{R}}_2(\widetilde{N})$ is a compact subset of $\Hom_{\mathrm{M}} (\pi_1(S_2),\Sp_{2n}(\bR))$, the conjugation action is proper (Lemma 1 of \cite{Kimi01} when the Lie group is $\PSL_n(\bR)$ or Proposition~1.1 of \cite{JM87} for more general and abstract setting), a contradiction. This completes the proof. 
\end{proof}

\subsection{Goldman Flows}  \label{sec:gflow} 
Let $t \in \bR$ and let $Q\in \mathfrak{a}$. The \emph{Goldman flow} on $\Hit_\mathsf{G}(S)$ by  $Q$ along $\alpha$ is given by  
\[
\Phi_t ^{Q,\alpha} ([\rho]):=\Phi^{tQ,\alpha}([\rho]).
\]
We can define the Goldman flow on $\Max_{2n}^+(S)$ along $\alpha$ by $Q$ as above. Unlike the Hitchin component case, a Goldman flow may not be defined everywhere on $\Max_{2n}^+(S)$. This is mainly due to the fact that $\rho(\alpha_{x_0})$ is not always loxodromic.  

The following theorem is essentially proved by Goldman although the original statement addresses the closed surface case only.
\begin{thm}[\cite{Goldman86}] Let $\Phi^{Q,\alpha} _t$ be a Goldman flow.
\begin{itemize}
\item[(i)] If $S$ is a closed surface of genus at least 2, the flow $[\rho]\mapsto \Phi^{Q,\alpha}_t( [\rho])$ defined as above is a Hamiltonian flow on $(\operatorname{Hit}_\mathsf{G}(S),\omega_{ABG})$. It is also a Hamiltonian flow on the subset of $(\Max_{2n}^+(S),\omega_{ABG})$ where the flow is defined. 
\item[(ii)] If $S$ is a compact surface with boundary of negative Euler characteristic, the flow $[\rho]\mapsto \Phi^{Q,\alpha}_t( [\rho])$ is a Hamiltonian flow on $(\operatorname{Hit}_\mathsf{G}(S,\mathbf{B}),\omega_{GHJW})$ for a choice of $\mathbf{B}$.
\end{itemize}
In either cases, $\Phi_t ^{Q,\alpha}$ is a $\vol$-preserving flow.
\end{thm}

For a split real form $\mathsf{G}$ with the positive structure $\Theta=\Delta$, we let $Q_\mathsf{G}\in \mathfrak{a}$ be any element in the boundary of a closed Weyl chamber. For instance, we may choose 
\[
Q_\mathsf{G} =\operatorname{diag}\left(1,1,-2\right)
\]
for $\mathsf{G}= \PSL_3(\bR)$ and 
\[
Q_\mathsf{G} = \operatorname{diag}(1,0,0,\cdots,0,-1)
\]
for $\mathsf{G}= \PSL_{n+1}(\bR)$, $\PSO_{n+1,n}$, $\PSO_{n,n}$ or $\PSp_{2n}(\bR)$ with $n\ge 2$.

For maximal representations, let
\begin{equation}\label{eq:QM}
Q_M:= \begin{pmatrix} I_n & 0 \\ 0 & - I_n\end{pmatrix}.
\end{equation}
We make this choice of $Q_M$ because it commutes with any matrix of the form (\ref{eq:standard}) so that the Goldman flow $\Phi^{Q_M}_t$ is well-defined everywhere on $\Max_{2n}^+(S)$.

We are particularly interested in the Goldman flow associated with $Q_{\mathsf{G}}$ and $Q_M$ along $\alpha$. We will denote $\Phi^{Q_\mathsf{G},\alpha}_t$ and $\Phi^{Q_M,\alpha}_t$ by  $\Phi_t^\alpha$ or simply by $\Phi_t$ if $\alpha$ is understood from the context. 

\begin{defn}
    We call the flows $\Phi^{Q_{\mathsf{G}},\alpha}_t$ on $\Hit_{\mathsf{G}}(S)$ (or on $\Hit_{\mathsf{G}}(S,\mathbf{B})$, if $S$ has boundary) and  $\Phi^{Q_M,\alpha}_t$ on $\Max^+_{2n} (S)$ \emph{bulging flows}.
\end{defn}

The terminology bulging flow is originated from  \cite{Goldman2013}, where the Goldman flow $\Phi^{Q_{\mathsf{PSL}_3(\bR)}, \alpha}_t$ on $\Hit_3(S)$ is studied in the context of convex projective geometry. 

The key properties of $Q_\mathsf{G}$ and $Q_M$ are that they are members of the maximal abelian subalgebra but not contained in the interior of the positive Weyl chamber.  To find such elements, the real rank of $\mathsf{G}$ must be bigger than 2. In this sense, having  a bulging flow is a characterizing property of higher rank Lie groups.

\section{The proof of the main theorems}\label{mainsection}
This section is devoted to the proof of our main theorems.

Throughout this section, $S$ will denote a compact orientable surface of negative Euler characteristic unless otherwise stated. We say that two unoriented closed curves in $S$ are isotopic if they are ambient isotopic as subsets of $S$.  By \cite[Theorem 2.1]{epstein1966}, if two oriented simple closed curves $\alpha$, $\beta$ are isotopic as embeddings $S^1\to S$ then $|\alpha|$ and $|\beta|$ are ambient isotopic as subsets of $S$. 

After some preparations, we prove Theorems~\ref{thm:main},  \ref{thm:maximal} and \ref{thm:main2} in Section~\ref{sec:mainproof}.

\subsection{Mapping class group actions and Goldman flows}
We prove that under certain conditions, the action of a mapping class group element and Goldman flows commute.

\begin{lem}\label{lem:commute} 
Let $\phi\in \Mod(S)$ be a mapping class group element. Suppose that there is an orientation preserving homeomorphism $f:S\to S$ in the class $\phi$ such that $f(|\alpha|)$ is isotopic to $|\alpha|$. Let $Q\in \mathfrak{a}$.  If $f$ preserves the orientation of $\alpha$, then  the Goldman flow $\Phi^{Q,\alpha}_t$ on $\Hit_\mathsf{G}(S)$ commutes with the $\phi$-action. The same conclusion holds for $\Max_{2n}^{+,\alpha}(S)$.
\end{lem}
\begin{proof} 
By \cite[Theorem 2.1]{epstein1966}, we may assume that  that $f(x_0)=x_0$, and $f(|\alpha|) = |\alpha|$.   Recall that we fixed a point $z\in \partial_\infty \pi_1(S_1)$ with $z\ne \alpha_{x_0}^{\pm}$. 

We first show the lemma for Hitchin representations. Let $\rho$ be any $(\alpha_{x_0},z)$-normalized $\mathsf{G}$-Hitchin representation. Since $f$ preserves the orientation of $\alpha$, we have  $f_\sharp(\alpha_{x_0}) = \alpha_{x_0}$. It follows that, either $\alpha$  is separating or non-separating, $f_\sharp$ leaves $j_\sharp(\pi_1(S_1))$ in $\pi_1(S)$ invariant. Thus, there is a $F\in \mathsf{L}_\Theta$ such that  $\Ad_F(f_*(\rho))$ is $(\alpha_{x_0},z)$-normalized. By Lemma~\ref{lem:normalization}, we know that $F$ commutes with $\rho(\alpha_{x_0})$ and $\exp Q$. 

Assume that $\alpha$ is non-separating and $f$ preserves the orientation of $\alpha$.  Then $f_\sharp$ leaves $j_\sharp(\pi_1(S_1))\subset\pi_1(S)$ invariant. By applying an inner automorphism if necessary, we may assume that $f_\sharp ^{-1} (\mathfrak{j}(v))=g \mathfrak{j}(v)$ for some $g\in \mathfrak{j}(\pi_1(S_1))$. We choose $F\in \mathsf{L}_\Theta$ such that $\Ad_F(f_*(\rho))$ is $(\alpha_{x_0},z)$-normalized. 

For $\gamma\in \mathfrak{j}(\pi_1(S_1))$, we compute
\begin{align*}
\widetilde{\Phi}^{tQ,\alpha_{x_0}}(\Ad_F(f_*(\rho)))(\gamma)&= \Ad_F (f_*(\rho))(\gamma)\\
&= F \rho (f_\sharp^{-1}(\gamma))F^{-1}.
\end{align*}
Recall that $f_\sharp ^{-1}(\gamma)\in \mathfrak{j}(\pi_1(S_1))$. By (\ref{R1action}), we have
\[
F \rho (f_\sharp^{-1}(\gamma))F^{-1}=F\widetilde{\Phi}^{tQ,\alpha_{x_0}}(\rho)(f_\sharp^{-1}(\gamma)) F^{-1}.
\]
By the definition of the action of $f_*$, 
\begin{align*}
F\widetilde{\Phi}^{tQ,\alpha_{x_0}}(\rho)(f_\sharp^{-1}(\gamma)) F^{-1}&= F f_*(\widetilde{\Phi}^{tQ,\alpha_{x_0}}(\rho))(\gamma) F^{-1}\\
&=\Ad_F(f_*(\widetilde{\Phi}^{tQ,\alpha_{x_0}}(\rho)))(\gamma).
\end{align*}
Hence, for $\gamma\in \mathfrak{j}(\pi_1(S_1))$,
\begin{equation}\label{commutenonsep1}
\widetilde{\Phi}^{tQ,\alpha_{x_0}}(\Ad_F(f_*(\rho)))(\gamma)=\Ad_F(f_*(\widetilde{\Phi}^{tQ,\alpha_{x_0}}(\rho)))(\gamma).
\end{equation}

Let $w:=\mathfrak{j}(v)$, so that $f_\sharp^{-1}(w) = g w $ for some $g\in \mathfrak{j}(\pi_1(S_1))$. Using the fact  $F\exp(tQ)=\exp(tQ)F$, we know that
\begin{align*}
\widetilde{\Phi}^{tQ,\alpha_{x_0}}(\Ad_F(f_*(\rho)))(w)&=\Ad_F(f_*(\rho))(w) \exp (tQ)\\
&= F \rho(f  _\sharp^{-1} (w))F^{-1} \exp (tQ )\\
&= F \rho(f_\sharp^{-1} (w)) \exp (tQ ) F^{-1} \\
&= F \rho(g)\rho(w) \exp (tQ) F^{-1}.
\end{align*}
Recall that $g\in \mathfrak{j}(\pi_1(S_1))$. Due to  (\ref{R1action}) and (\ref{R2action}), we know that
\begin{align*}
F \rho(g)\rho(w) \exp (tQ ) F^{-1}&= F \widetilde{\Phi}^{tQ,\alpha_{x_0}}(\rho) (g) \widetilde{\Phi}^{tQ,\alpha_{x_0}}(\rho) (w) F^{-1} \\
&= F \widetilde{\Phi}^{tQ,\alpha_{x_0}}(\rho) (gw) F^{-1}\\
&= F \widetilde{\Phi}^{tQ,\alpha_{x_0}}(\rho) (f_{\sharp}^{-1}(w)) F^{-1}.
\end{align*}
By the definition of $f_*$,
\begin{align*}
F \widetilde{\Phi}^{tQ,\alpha_{x_0}}(\rho) (f_{\sharp}^{-1}(w)) F^{-1}&= F f_*( \widetilde{\Phi}^{tQ,\alpha_{x_0}}(\rho))(w) F^{-1} \\
&=\Ad_F(f_*(\widetilde{\Phi}^{tQ,\alpha_{x_0}}(\rho)))(w).
\end{align*}
This shows that
\begin{equation}\label{commutenonsep2}
\widetilde{\Phi}^{tQ,\alpha_{x_0}}(\Ad_F(f_*(\rho)))(w)=\Ad_F(f_*(\widetilde{\Phi}^{tQ,\alpha_{x_0}}(\rho)))(w).
\end{equation}
By (\ref{commutenonsep1}) and (\ref{commutenonsep2}), it follows that 
\[
\Ad_F(f_*(\widetilde{\Phi}^{tQ,\alpha_{x_0}}(\rho)))=\widetilde{\Phi}^{tQ,\alpha_{x_0}} (\Ad_F(f_*(\rho))).
\]
On the level of $\Hit_\mathsf{G}(S)$, we obtain $ \phi_* ( \Phi^{Q,\alpha} _t ([\rho]))=\Phi^{Q,\alpha}_t( \phi_*([\rho]))$.

Suppose that $\alpha$ is separating and $f$ preserves the orientation of $\alpha$. Then we may assume that $f$ preserves each $S_i$. Let $\rho$ be any $(\alpha_{x_0},z)$-normalized representation. As we mentioned in the beginning of the proof, we choose $F\in \mathsf{L}_\Theta$ such that  $\Ad_F(f_*(\rho))$ is $(\alpha_{x_0},z)$-normalized. 

For $\gamma\in \mathfrak{j}(\pi_1(S_1))$,
\begin{align*}
\widetilde{\Phi}^{tQ, \alpha_{x_0}}(\Ad_F(f_*(\rho))) (\gamma)&= \Ad_F(f_*(\rho))(\gamma)\\
&= F f_*(\rho)(\gamma) F^{-1}\\
&=F \rho(f_\sharp^{-1}(\gamma)) F^{-1}.
\end{align*}
Since $f_\sharp^{-1}(\gamma)\in \mathfrak{j}(\pi_1(S_1))$, we use (\ref{R1actionsep}) to compute
\begin{align*}
F \rho(f_\sharp^{-1}(\gamma)) F^{-1}&=F \widetilde{\Phi}^{tQ,\alpha_{x_0}}(\rho) (f_\sharp^{-1}(\gamma)) F^{-1}\\
&= \Ad_F( f_*(\widetilde{\Phi}^{tQ,\alpha_{x_0}}(\rho))) (\gamma).
\end{align*}
Hence, for $\gamma\in \mathfrak{j}(\pi_1(S_1))$ we have
\begin{equation}\label{commsep1}
\widetilde{\Phi}^{tQ, \alpha_{x_0}}(\Ad_F(f_*(\rho))) (\gamma)=\Ad_F( f_*(\widetilde{\Phi}^{tQ,\alpha_{x_0}}(\rho))) (\gamma).
\end{equation}
Similarly, for $\gamma\in \mathfrak{j}(\pi_1(S_2))$, 
\begin{align*}
\widetilde{\Phi}^{tQ,\alpha_{x_0}}(\Ad_F(f_*(\rho)))(\gamma)&= \exp (tQ) \Ad_F(f_*(\rho))(\gamma) \exp(-tQ)\\
&= \exp(tQ) F f_*(\rho)(\gamma)F^{-1} \exp(-tQ)\\
&=F\exp(tQ) f_*(\rho)(\gamma) \exp (-tQ) F^{-1}\\
&=F \exp(tQ) \rho(f_\sharp^{-1}(\gamma)) \exp(-tQ) F^{-1}
\end{align*}
due to $F\exp(tQ) = \exp(tQ) F$. Recall that $f_\sharp^{-1}(\gamma) \in \mathfrak{j}(\pi_1(S_2))$. By (\ref{R2actionsep}), we have
\begin{align*}
F \exp(tQ) \rho(f_\sharp^{-1}(\gamma)) \exp(-tQ) F^{-1}&=F\widetilde{\Phi}^{tQ,\alpha_{x_0}}(\rho)(f_\sharp^{-1}(\gamma))F^{-1}\\
&= \Ad_F (f_*(\widetilde{\Phi}^{tQ,\alpha_{x_0}}(\rho)))(\gamma).
\end{align*}
This yields
\begin{equation}\label{commsep2}
  \widetilde{\Phi}^{tQ,\alpha_{x_0}}(\Ad_F(f_*(\rho)))(\gamma) =  \Ad_F (f_*(\widetilde{\Phi}^{tQ,\alpha_{x_0}}(\rho)))(\gamma)
\end{equation}
for $\gamma\in \mathfrak{j}(\pi_1(S_2))$. Therefore, combining (\ref{commsep1}) and  (\ref{commsep2}), we know that 
\[
\Ad_F(f_*(\widetilde{\Phi}^{tQ,\alpha_{x_0}}(\rho)))=\widetilde{\Phi}^{tQ,\alpha_{x_0}} (\Ad_F(f_*(\rho))).
\]
It follows that $ \phi_* ( \Phi^{Q,\alpha} _t ([\rho]))=\Phi^{Q,\alpha}_t( \phi_*([\rho]))$.

Now we handle the maximal representations. Let $\rho$ be a  $(\alpha_{x_0},z)$-normalized maximal representation such that $[\rho]\in \Max_{2n}^{+,\alpha}(S)$.  Since $\rho(\alpha_{x_0})$ is loxodromic and $f_*(\rho)(\alpha_{x_0})=\rho(\alpha_{x_0})$, we use Lemma~\ref{lem:normalizationmax} to find a diagonal matrix $F$ such that  $\Ad_F(f_*(\rho))$ is $(\alpha_{x_0},z)$-normalized. Since $F$ commutes with $\rho(\alpha_{x_0})$ and  $\exp Q$, the above proof for Hitchin representations works equally for maximal representations.
\end{proof}

\subsection{Finding an initial open set}\label{findopenset}
In this subsection, we will construct a nice initial open set. Our construction heavily relies on the collar lemmas by Beyrer--Guichard--Labourie--Pozzetti--Wienhard and Burger-Pozzetti. For $\PSL_n(\bR)$-Hitchin representations, Lee--Zhang established the first explicit collar lemmas; see \cite{LZ17}. 

We first focus on the Hitchin case, where $\mathsf{G}$ is a split real form with the positive structure $\Theta= \Delta$. Let $\lambda:\mathsf{G} \to \overline{\mathfrak{a}^+}$ be the Jordan projection. Let $\theta$ be a linear functional on $\mathfrak{a}$. For an oriented essential closed curve $\beta$, and a conjugacy class $[\rho]$ of a $\Theta$-positive representation $\rho:\pi_1(S)\to \mathsf{G}$, we denote the $\theta$-length (or the $\theta$-period) of $\beta$ by
\[
l ^\theta _{[\rho]}(\beta) := \theta \circ \lambda(\rho(\beta'))+\theta \circ \lambda(\rho(\beta')^{-1})
\]
where $\beta'$ is a loop based at $x_0$ freely homotopic to $\beta$. Note that $l^\theta _{[\rho]}$ does not depend on the choice of a representation $\rho$ in the conjugacy class $[\rho]$ nor the choice of $\beta'$ in the free homotopy class of $\beta$. For this reason, we frequently write $l^\theta _\rho$ instead of $l^\theta _{[\rho]}$. Moreover, observe that $l^\theta _{\rho}(\beta) = l^\theta _{\rho}(\overline{\beta})=l^\theta _{\rho}(|\beta|)$, where $\overline{\beta}$ means the orientation reversal. Therefore, $l^\theta _{\rho}$ is indeed functions on the isotopy classes of unoriented essential closed curves.

Let $\omega_{\theta_1}$ be the fundamental weight attached to the root $\theta_1\in \Delta$. As we will only use the $\omega_{\theta_1}$-length in this paper, we write $l_\rho = l_\rho ^{\omega_{\theta_1}}$ for simplicity. 

Now we state our collar lemma which is essentially due to Beyrer--Guichard--Labourie--Pozzetti--Wienhard \cite{beyrer2024}.

\begin{thm}\label{thm:collar} Let $\mathsf{G}$ be a split real form with the positive structure $\Theta=\Delta$. Let $\rho:\pi_1(S)\to \mathsf{G}$ be a $\Theta$-positive representation. Let $\alpha$, $\beta$ be two essential closed curves having non-trivial geometric intersection number. Then we have
    \[
    \exp(-l_\rho (\alpha)) + \exp (-l_\rho(\beta)) < 1.
    \]
\end{thm}
\begin{proof}
    By \cite[Corollary D]{beyrer2024}, we have
    \begin{equation}\label{eq:GWcollar}
    \exp (-l_\rho(\alpha))+\exp (-\theta_1 \circ \lambda(\rho(\beta)) )<1.
    \end{equation}
    We claim that 
    \[
    l_\rho(\beta)\ge \theta_1\circ \lambda(\rho(\beta)).
    \]
    
    Let $w_0$ be the longest element of the Weyl group. Since $\lambda(x^{-1}) = -w_0 \cdot\lambda(x)$ for any $x\in \mathsf{G}$, we know that 
    \[
    l_\rho (\beta) = \omega_{\theta_1}(\rho(\beta)) + \omega_{-w_0 \cdot \theta_1}(\rho(\beta)).
    \]
    According to Table~\ref{tab:fundamentalweights} in Section~\ref{sec:notation}, $\omega_{\theta_1}+\omega_{-w_0\cdot \theta_1}$ can be written as a positive linear combination of simple roots and the coefficient of $\theta_1$ in this expression is at least 1. This shows that
    \[
    \omega_{\theta_1} + \omega_{-w_0\cdot \theta_1}\ge  \theta_1.
    \]
    Therefore, we know that
    \begin{equation}\label{eq:weightbound}
    \exp (-\theta_1 \circ \lambda(\rho(\beta))) \ge \exp (- l_\rho(\beta)).
    \end{equation}
    We obtain the desired inequality by combining (\ref{eq:GWcollar}) and (\ref{eq:weightbound}). 
\end{proof}

Now we can present our main construction.

\begin{prop} \label{prop:length}  
Let $S$ be a closed orientable surface of genus $g>1$. Let $\alpha$ be a given essential simple closed curve in $S$. Let $\{|\alpha|=\alpha_1,\alpha_2, \cdots, \alpha_{3g-3}\}$ be unoriented simple closed curves giving us a pair-of-pant decomposition of $S$. Then there exist a constant $\epsilon>0$, a nonempty  pre-compact open set $N$ of $\Hit_{\mathsf{G}}(S)$ satisfying the following properties: 
\begin{itemize}
\item[(i)] For any $t,s\in \bR$, $[\rho_1], [\rho_2]\in N$ and any $i=1,2,\cdots,3g-3$, we have 
\[
\left|l_{\Phi_t([\rho_1])}(\alpha_i)-l_{\Phi_s([\rho_2])}(\gamma)\right|>\epsilon,
\]
unless $\gamma$ is isotopic to $\alpha_i$.
\item[(ii)] $\{\phi\in \Mod(S)\,|\,\phi_*(N)\cap N \ne \emptyset\}=\{1\}$.  In particular $N$ avoids orbifold points of $\mathcal{M}_{\mathsf{G}}(S)$.
\item[(iii)] If $\alpha$ is non-separating, $S\setminus\nu(|\alpha|)=S_1$, 
\[
\{\phi\in \Mod^*(S_1,\partial S_1)\,|\, \phi_*(\mathcal{R}_1(N))\cap \mathcal{R}_1(N)\ne\emptyset\} =\{1\};
\]
if $\alpha$ is separating, $S\setminus\nu(|\alpha|)=S_1\cup S_2$,
\[
\{\phi\in \Mod(S_i)\,|\,\phi_*(\mathcal{R}_i(N))\cap \mathcal{R}_i(N)\ne \emptyset\}=\{1\} \text{ for }i=1,2
\]
where $\mathcal{R}_i$ are the restriction maps defined in Section~\ref{sec:decomposition}.
\end{itemize}
\end{prop}
\begin{proof} 
Let $D_{\mathsf{G}}$ be the unique real solution for the equation $2e^{-x} = 1$ where we have $0 < D_{\mathsf{G}}< 1$. For any $\mathsf{G}$-Hitchin representation $\rho$ and any two essential closed curves $\alpha$, $\gamma$ with $i(\alpha, \gamma)\ne 0$,  $l_{\rho}(\alpha) < D_{\mathsf{G}}$ implies $l_{\rho}(\gamma) > D_{\mathsf{G}}$ by Theorem~\ref{thm:collar}.

Let $\{E,F,H\}$ be the regular $\mathfrak{sl}_2$-triple defining the representation $\tau_\mathsf{G}:\PSL_2(\bR) \to \mathsf{G}$ and let $\theta_1\in \Delta$ be the simple root. Define 
\[
d_\mathsf{G} := \omega_{\theta_1}(H)+\omega_{-w_0\cdot \theta_1}(H).
\]
We construct a hyperbolic structure on $X$ on $S$  using the Fenchel-Nielsen coordinates such that the hyperbolic lengths $l_X$ of $\alpha_1,\cdots,\alpha_{3g-3}$ are all distinct and smaller than $\frac{2 D_{\mathsf{G}}}{ d_{\mathsf{G}}}$. Let $\rho_F:\pi_1(S)\to \PSL_{2}(\bR)$ be the holonomy of this hyperbolic structure.  Since 
\[
\lambda(\tau_{\mathsf{G}}\circ \rho_F(\alpha_i )) = \frac{l_{X}(\alpha_i)}{2}H,
\]
we know that the Fuchsian representation $\rho_0 = \tau_{\mathsf{G}}\circ \rho_F$ satisfies 
\[
l_{\rho_0}(\alpha_i)=\frac{l_X(\alpha_i)}{2}\cdot d_{\mathsf{G}}<D_{\mathsf{G}}
\]
for all $i=1,2,\cdots, 3g-3$ and that 
\[
l_{\rho_0}(\alpha_i)\ne l_{\rho_0}(\alpha_j)
\]
for all $i\ne j$. 

Observe that the map $L:\Hit_{\mathsf{G}}(S)\to \bR^{3g-3}$ defined by 
\[
[\rho]\mapsto (l_{\rho}(\alpha_1),\, l_{\rho}(\alpha_2),\, \cdots,\, l_{\rho}(\alpha_{3g-3}))
\]
is continuous. Let 
\[
\epsilon=\frac{1}{3}\min\left\{\min_{i\ne j} |l_{\rho'}(\alpha_i) - l_{\rho'}(\alpha_j)|,\,\min_k |l_{\rho'}(\alpha_k)-D_{\mathsf{G}}|\right\}>0
\]
and define $N_1$ to be the inverse image of the open neighborhood
\[
\prod_{i=1} ^{3g-3}(l_{\rho'}(\alpha_i)-\epsilon,\,l_{\rho'}(\alpha_i)+\epsilon)
\]
by the continuous map $L$. Then for any $[\rho_1]$ and $[\rho_2]$ in $N_1$,
\begin{itemize}
    \item[(1)] $l_{\rho_1}(\alpha_i)<D_{\mathsf{G}}-\epsilon$ for all $i=1,2,\cdots,3g-3$
    \item[(2)] $|l_{\rho_1}(\alpha_i) - l_{\rho_2}(\alpha_j)|>\epsilon$ for all $i\ne j$.
\end{itemize}

We claim that the set $N_1$ satisfies (i). For this, choose any $[\rho_1]$ and $[\rho_2]$ in $N_1$ and any real numbers $s$ and $t$. Let $\gamma$ be an essential closed curve not isotopic to $\alpha_i$. Since $\{\alpha_1,\cdots, \alpha_{3g-3}\}$ is a maximal collection of disjoint essential simple closed curves,  $\gamma$ is either isotopic to one of $\alpha_j$ or intersects essentially with some $\alpha_j$. If $\gamma$ is isotopic to $\alpha_j$ for some $j\ne i$, we have, by property (2),
\[
|l_{\Phi_t([\rho_1])}(\alpha_i) - l_{\Phi_s([\rho_2])}(\gamma)|=|l_{\rho_1}(\alpha_i) - l_{\rho_2}(\alpha_j)|>\epsilon.
\]
Suppose that $\gamma$ intersects essentially with some $\alpha_j$. We know that 
\[
l_{\Phi_s([\rho_2])}(\alpha_j)=l_{\rho_2}(\alpha_j)<D_{\mathsf{G}}-\epsilon
\]
by property (1). Then by our choice of the constant $D_{\mathsf{G}}$, 
\[
l_{\Phi_s([\rho_2])}(\gamma)>D_{\mathsf{G}}.
\]
Therefore,
\[
|l_{\Phi_t([\rho_1])}(\alpha_i) - l_{\Phi_s([\rho_2])}(\gamma)|=|l_{[\rho_1]}(\alpha_i) - l_{\Phi_s([\rho_2])}(\gamma)|>\epsilon
\]
by the property (1) again. This shows that the set  $N_1$ has property (i).

By Lemma~\ref{emptyinterior}, we may find a pre-compact non-empty open $N_2\subset N_1$ such that (ii) holds.

Now assume that $\alpha$ is non-separating.  Choose any $[\rho]\in N_2$ and let 
\[
\mathbf{B}_1 := ([\rho(\alpha_{x_0})],[\rho(\alpha_{x_0})]).
\]
Then 
\[
\mathcal{R}_1(N_2)\cap \Hit_{\mathsf{G}}(S_1,\mathbf{B}_1)
\]
is a non-empty open set. Since $\Mod^*(S_1,\partial S)$ acts effectively and properly on $\Hit_{\mathsf{G}}(S_1,\mathbf{B}_1)$ we apply Lemma~\ref{emptyinterior} to find an element 
\[
[\rho']\in \mathcal{R}_1(N_2)\cap \Hit_{\mathsf{G}}(S_1, \mathbf{B}_1)
\]
with the trivial stabilizer. Since $\Mod^*(S,\partial S)$ also acts properly and effectively on $\Hit_{\mathsf{G}}(S_1)$, we can find a  open neighborhood $U\subset \Hit_{\mathsf{G}}(S_1)$ of $[\rho']$ such that 
\[
\{\phi\in \Mod^*(S_1,\partial S)\,|\,\phi_*(U) \cap U\ne\emptyset\}=\{1\}.
\]
Then $N= \mathcal{R}_1^{-1} (U)\cap N_2$ the desired open set satisfying (i) -- (iii). 

Now assume that $\alpha$ is separating. Choose $[\rho]\in N_2$ and set $[\rho_i] = \mathcal{R}_i([\rho])$ for $i=1,2$. Let $\zeta^1$ and $\zeta^2$ be oriented boundary components as a result of cutting $S$ along $\alpha$ (see Section~\ref{fungpdecomsep}). Let $\mathbf{B}_1=\mathbf{B}_2=([\rho(\alpha_{x_0})])$. We know that 
\[
[\rho_i]\in \mathcal{R}_i(N_2)\cap \Hit_{\mathsf{G}}(S_i,\mathbf{B}_i).
\]
Hence, 
\[
\mathcal{R}_i(N_2)\cap \Hit_{\mathsf{G}}(S_i,\mathbf{B}_i)
\]
is non-empty and open in $\Hit_{\mathsf{G}}(S_i,\mathbf{B}_i)$. Since $\Mod^*(S_i, \partial S_i)$ acts properly and effectively on $\Hit_{\mathsf{G}}(S_i, \mathbf{B}_i)$,  Lemma~\ref{emptyinterior} shows that there is 
\[
[\rho_i']\in \mathcal{R}_i(N_2)\cap \Hit_n(S_i, \mathbf{B}_i)
\]
having the trivial stabilizer in $\Mod^*(S_i, \partial S_i)$.  As $\Mod^*(S_i, \partial S_i)$ acts properly and effectively on $\Hit_{\mathsf{G}}(S_i)$, there is an open neighborhood $U_i\subset \Hit_{\mathsf{G}}(S_i)$ of $[\rho'_i]$ such that  
\[
\{\phi\in \Mod^*(S_i,\partial S_i)\,|\,\phi_*(U_1)\cap U_1\ne\emptyset\}=\{1\}.
\]
We know that 
\[
U_1\times U_2 \cap \mathcal{R}_1\times\mathcal{R}_2 (N_2) \ne \emptyset.
\]
Hence, 
\[
N=\mathcal{R}_1^{-1}(U_1) \cap \mathcal{R}_2 ^{-1} (U_2) \cap N_2
\]
is non-empty and satisfies (i) -- (iii).
\end{proof} 

Of course you can do any deformation by Goldman flows for $\alpha$ and the conclusion of Proposition~\ref{prop:length} still holds.

We can also show the relative version of Proposition~\ref{prop:length}. We only state the proposition and omit the proof because it is almost the same to the proof of Proposition~\ref{prop:length}. Here we need Lemma~\ref{boundaryfreedom} to ensure that our construction results in a non-empty set for any choice of the boundary holonomy.

\begin{prop} \label{prop:lengthrel}  
Let $S$ be a compact orientable surface of negative Euler characteristic with genus $g$ and $b>0$ boundary components. Assume $(g,b)\ne(0,3)$. Let $\alpha$ be a given essential simple closed curve in $S$. Let $\{|\alpha|=\alpha_1,\alpha_2, \cdots, \alpha_{3g+b-3}\}$ be unoriented simple closed curves giving us a pair-of-pant decomposition of $S$. Let $\mathbf{B}$ be a choice of boundary holonomy. Then there exist a constant $\epsilon>0$, a nonempty pre-compact open set $N$ of  $\Hit_n(S,\mathbf{B})$ satisfying the following properties: 
\begin{itemize}
\item[(i)] For any $t,s\in \bR$, $[\rho_1], [\rho_2]\in N$ and any $i=1,2,\cdots,3g+b-3$, we have 
\[
\left|l_{\Phi_t([\rho_1])}(\alpha_i)-l_{\Phi_s([\rho_2])}(\gamma)\right|>\epsilon,
\]
unless $\gamma$ is isotopic to $\alpha_i$.
\item[(ii)] $\{\phi\in \Mod(S)\,|\,\phi_*(N)\cap N \ne \emptyset\}=\{1\}$.  In particular $N$ avoids orbifold points of   $\mathcal{M}_n(S,\mathbf{B})$.
\item[(iii)] If $\alpha$ is non-separating, $S\setminus\nu(|\alpha|)=S_1$, 
\[
\{\phi\in \Mod^*(S_1,\partial S)\,|\, \phi_*(\mathcal{R}_1(N))\cap \mathcal{R}_1(N)\ne\emptyset\} =\{1\};
\]
if $\alpha$ is separating, $S\setminus\nu(|\alpha|)=S_1\cup S_2$,
\[
\{\phi\in \Mod(S_i)\,|\,\phi_*(\mathcal{R}_i(N))\cap \mathcal{R}_i(N)\ne \emptyset\}=\{1\} \text{ for }i=1,2
\]
where $\mathcal{R}_i$ are the restriction maps defined in Section~\ref{sec:decomposition}.
\end{itemize}
\end{prop}

Now we state the maximal component version of Proposition~\ref{prop:length}. Given a maximal representation $[\rho]\in \Max_{2n}(S)$, we define the length $L_{[\rho]}(\beta)$ of an essential closed curve $\beta$ by
\[
L_{[\rho]} (\beta) := 2 \sqrt{\sum_{i=1} ^ n \log^2 |\lambda_i(\rho(\beta'))|}.
\]
We also recall the following version of collar lemma for maximal representations due to Burger--Pozzetti.
\begin{thm}[\cite{burger2017}]\label{thm:collarmax}
Let $S$ be a closed orientable surface of genus $>1$ and let $[\rho]\in\Max_{2n}(S)$. For any closed curve $\gamma$ meeting $\alpha$ essentially, we have
\[
\left(\exp{\frac{L_\rho(\alpha)}{\sqrt{n}}}-1\right)\left(\exp\frac{L_\rho(\gamma)}{\sqrt{n}}-1\right)\ge 1.
\]
\end{thm}

\begin{prop} \label{prop:lengthmax} 
Let $S$ be a closed orientable surface of genus $>1$. Let $\alpha$ be a non-separating essential simple closed curve in $S$. Let $\{|\alpha|=\alpha_1,\alpha_2, \cdots, \alpha_{3g-3}\}$ be unoriented simple closed curves giving us a pair-of-pant decomposition of $S$.   Let $C$ be a connected component of $\Max_{2n}(S)$. Then there exist a constant $\epsilon>0$, a nonempty pre-compact open set $N$ of $C\cap \Max_{2n}^+(S)$ having the following properties:
\begin{itemize}
\item[(i)]  For any $t,s\in \bR$, $[\rho_1], [\rho_2]\in N$ and any $i=1,2,\cdots,3g-3$, we have 
\[
\left|L_{\Phi_t([\rho_1])}(\alpha_i)-L_{\Phi_s([\rho_2])}(\gamma)\right|>\epsilon,
\]
unless $\gamma$ is isotopic to $\alpha_i$.
\item[(ii)] $\rho(\alpha_{x_0})$ is loxodromic for all $[\rho]\in N$. 

\item[(iii)] $\{\phi\in \Mod(S)\,|\,\phi_*(N)\cap N\ne \emptyset\} =\{1\}$. Hence $N$ avoids orbifold points of $\mathcal{M}_n(S)$.
\item[(iv)] $\{\phi\in \Mod^*(S_1)\,|\,\phi_*(\mathcal{R}_1(N))\cap \mathcal{R}_1(N)\ne \emptyset\}=\{1\}$, where $\mathcal{R}_0$ is the restriction map defined in Section~\ref{sec:decomposition}.
\end{itemize}
\end{prop}

To show Proposition~\ref{prop:lengthmax}, we need some preparation. 

Let  $\rho\in \Hom(\pi_1(S),\SL_2(\bR))$ be a lift of the holonomy of a hyperbolic structure. Recall that we defined the length of an essential closed curve $\gamma$ by 
\[
l_\rho (\gamma) = \log \left|\frac{\lambda_1(\rho(\gamma))}{\lambda_2(\rho(\gamma))}\right|=2\log |\lambda_1(\rho(\gamma))|.
\]
We observe that for the irreducible maximal representation, we have 
\[
L_{\tau_{2n}\circ \rho}= \sqrt{\frac{n(4n^2-1)}{3}}\cdot l_{\rho}.
\]
For the diagonal representation, we have 
\[
L_{\delta_{2n}\circ(I_n\otimes \rho)} = \sqrt{n} \cdot l_\rho.
\]
Given these observations, we prove the following:

\begin{lem}\label{lem:short}
    Let $S$ be an orientable closed surface of genus $>1$ and let $\{\alpha_1= |\alpha|, \alpha_2, \cdots, \alpha_{3g-3}\}$ be a pants-decomposition of $S$ extending $\alpha$. Let $C$ be a connected component of $\Max_{2n}(S)$.  For any given $\epsilon>0$, one can find a maximal representation $[\rho]\in C\cap \Max^+ _{2n}(S)$ such that $L_\rho(\alpha_i) < \epsilon$ for all $i$, $L_{\rho}(\alpha_i)\ne L_{\rho}(\alpha_j)$ for all $i\ne j$, and $\rho(\alpha)$ has only real eigenvalues.
\end{lem}
\begin{proof}
First assume that $n>2$. In this case, we furthermore prove that $C$ contains a copy of the Teichm\"uller space. Then, by using the Fenchel-Nielsen coordinates and Lemma~\ref{lem:generic}, the lemma follows. 

Among $3\cdot  2^{2g}$ connected components of $\Max_{2n}(S)$, there are $2^{2g}$ components obtained by lifting the $\PSp_{2n}(\bR)$-Hitchin component. As we already know that the $\PSp_{2n}(\bR)$-Hitchin component contains a copy of $\mathcal{T}(S)$, we may assume that $C$ is not a lift of the Hitchin component. In this case, Theorem~\ref{thm:ComponentContainsFuchsian} shows that $C$ contains a twisted diagonal representation. Hence, it suffices to show that $[\rho]\mapsto [ \delta_{2n} \circ(\kappa\otimes  \rho)]$ is injective, where $\kappa:\pi_1(S)\to \mathsf{O}_n$ is a representation for the twisting corresponding to the connected component $C$. Let $[\rho_1]\ne[\rho_2]$ be elements in $\mathcal{T}(S)$. One can find an essential simple closed curve $\gamma$ such that $l_{\rho_1}(\gamma)\ne l_{\rho_2}(\gamma)$. Since 
\[
L_{ \delta_{2n}\circ(\kappa\otimes \rho)}(\gamma)=\sqrt{n}\cdot l_\rho(\gamma) L_{\kappa\otimes I}(\gamma),
\]
we have 
\[
L_{\delta_{2n} \circ(\kappa\otimes \rho_1)}(\gamma)\ne L_{\delta_{2n} \circ (\kappa\otimes \rho_2)}(\gamma).
\]
This shows that $[\delta_{2n}\circ(\kappa\otimes \rho_1)] \ne [\delta_{2n}  \circ(\kappa\otimes \rho_2)]$. Finally, as we can assume that $\kappa$ is a homomorphism into  $\mathbb{Z}_2$ generated by $\operatorname{diag}(-1,1,1,\cdots,1)$, eigenvalues of $\delta_{2n}\circ(\kappa\otimes \rho)(\alpha)$ are all real numbers. 

Now suppose that $n=2$. Again when $C$ is a one of lifted Hitchin components, the result readily follows. Thus, we assume that $C$ is not such components. Then, by Theorem~\ref{thm:ComponentContainsFuchsian}, one finds a hybrid maximal representation in $C$ by amalgamating irreducible and diagonal Fuchsian representations along a separating essential simple closed curve. Hence, combining the Fenchel-Nielsen coordinates on each subsurface and Lemma~\ref{lem:generic}, we obtained a desired maximal representation. 
\end{proof}

\begin{proof}[Proof of Proposition~\ref{prop:lengthmax}]
    Let $D^M_{2n} = \sqrt{n} \log 2$. By Theorem~\ref{thm:collarmax}, if $L_\rho(\alpha)< D_{2n} ^M $ then  $L_{\rho}(\gamma)>D_{2n} ^M$ for any $\gamma$ intersecting with $\alpha$ essentially.

    Lemma~\ref{lem:short}, Lemma~\ref{lem:simpleeigenvalues} and the continuity of the length function guarantee that there is  $[\rho_0]\in C\cap \Max_{2n}^+(S)$ such that
    \begin{itemize}
        \item $L_{\rho_0} (\alpha_i) < D^M_ {2n}$ for all $i=1,2,\cdots, 3g-3$
        \item $L_{\rho_0} (\alpha_i ) \ne L_{\rho_0}(\alpha_j)$ for all $i\ne j$ and
        \item $\rho_0(\alpha_{x_0})$ is loxodromic. 
    \end{itemize}
    Since these conditions are open conditions, we use the same argument as in the proof of Proposition~\ref{prop:length} to  find an open neighborhood $N_1\subset C \cap \Max_{2n}^+(S)$ of $[\rho_0]$ and a positive real number $\epsilon$ such that
        \begin{itemize}
        \item[(1)] $L_{\rho} (\alpha_i) < D^M_ {2n}-\epsilon$ for all $i=1,2,\cdots, 3g-3$ and all $[\rho]\in N_1$
        \item[(2)] $|L_{\rho_1}  (\alpha_i ) - L_{\rho_2}(\alpha_j)|>\epsilon$ for all $i\ne j$ and all $[\rho_1],[\rho_2]\in N_1$ and
        \item[(3)] $\rho(\alpha_{x_0})$ is loxodromic for all $[\rho]\in N_1$. 
    \end{itemize}
    In particular, every element in $N_1$ satisfies (ii). 
    
    To show that $N_1$ has property (i), we choose any $[\rho_1]$ and $[\rho_2]$ in $N_1$ and any real numbers $s$ and $t$. Suppose that we are given an essential simple closed curve $\gamma$ which is not isotopic to $\alpha_i$. If $\gamma$ is isotopic to $\alpha_j$ for some  $j\ne i$, we have 
    \[
    |L_{\Phi_t([\rho_1])}(\alpha_i) -   L_{\Phi_s([\rho_2])}(\gamma)|=|    L_{[\rho_1]}(\alpha_i) -   L_{[\rho_2]}(\alpha_j)| >\epsilon
    \]
    by property (2). If $\gamma$ is not isotopic to any of $\alpha_j$, then $\gamma$ intersects some $\alpha_j$ essentially. By property (1) we have 
    \[
    L_{\Phi_s([\rho_2])}(\alpha_j)=L_{\rho_2}(\alpha_j)<D^M_{2n}-\epsilon.
    \]
    This forces $L_{\Phi_s([\rho_2])}(\gamma)>D^M_{2n}$ by our choice of the constant $D^M_{2n}$. Due to property (1) again, it follows that
    \[
    |L_{\Phi_t([\rho_1])}(\alpha_i) -   L_{\Phi_s([\rho_2])}(\gamma)| =|    L_{[\rho_1]}(\alpha_i) -   L_{\Phi_s([\rho_2])}(\gamma)| >\epsilon.
    \]
    This shows that $N_1$ satisfies (i).
    
    Since the stabilizer of  $C$ in $\Mod(S)$ acts effectively on $C$,  Lemma~\ref{emptyinterior} allows us to find a subset $N_2\subset N_1$ such that $N_2$ is pre-compact and enjoys property (iii). To obtain $N\subset N_2$ with the condition (iv), we just follow the argument in the proof of Proposition~\ref{prop:length}.
\end{proof}

\subsection{Proofs of Theorems~\ref{thm:main}, \ref{thm:maximal} and \ref{thm:main2}}\label{sec:mainproof}

\begin{lem} \label{lem:mono}
Let $\rho$ and $\rho'$ be two $\mathsf{G}$-Hitchin representations (maximal representations). 
Suppose that for all unoriented essential closed curve $\beta$ on $S$ which is not isotopic to $|\alpha|$ we have $l_{\rho'}(\alpha) \ne l_\rho (\beta)$ ($L_{\rho'}(\alpha)\ne L_\rho (\beta)$ respectively).  Then for any homeomorphism $f: S \ra S$ where $f_\ast([\rho]) = [\rho']$, $f(|\alpha|)$ must be isotopic to $|\alpha|$. 
\end{lem}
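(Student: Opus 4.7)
The plan is to use the basic equivariance of the length function under the mapping class group action, combined with the extremal hypothesis on $\rho$, to force $f^{-1}(|\alpha|)$ to equal $|\alpha|$ up to isotopy.

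First, I would record the key identity. If $f: S \to S$ is a homeomorphism with induced automorphism $f_\sharp \in \operatorname{Aut}(\pi_1(S, x_0))$ (after choosing a path from $x_0$ to $f(x_0)$), then $f_\ast([\rho]) = [\rho \circ f_\sharp^{-1}]$. For any oriented essential closed curve $\beta$ represented by a loop $\beta' \in \pi_1(S)$, the element $f_\sharp^{-1}(\beta')$ represents (up to conjugacy) the free homotopy class of $f^{-1}(\beta)$. Since $l_{[\sigma]}(\beta) = \log\bigl(\lambda_1(\sigma(\beta'))/\lambda_n(\sigma(\beta'))\bigr)$ depends only on the conjugacy class of $\sigma$ and the free homotopy class of $\beta$, this yields
\[
l_{f_\ast([\rho])}(\beta) \;=\; l_{[\rho]}(f^{-1}(\beta)).
\]
I would state this as a short preliminary observation, noting that it also descends to unoriented curves since $l$ is orientation-insensitive.

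Next, I would apply this to $\alpha$ together with the hypothesis $f_\ast([\rho]) = [\rho']$:
\[
l_{[\rho]}(f^{-1}(|\alpha|)) \;=\; l_{[\rho']}(|\alpha|) \;\leq\; c.
\]
Because $f$ is a homeomorphism and $|\alpha|$ is an essential simple closed curve, $f^{-1}(|\alpha|)$ is also an essential simple closed curve on $S$. Now invoke the extremal property of $|\alpha|$ under $\rho$: every essential closed curve $\beta$ not isotopic to $|\alpha|$ satisfies $l_{[\rho]}(\beta) > c$. If $f^{-1}(|\alpha|)$ were not isotopic to $|\alpha|$, then $l_{[\rho]}(f^{-1}(|\alpha|)) > c$, contradicting the inequality above. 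Hence $f^{-1}(|\alpha|)$ is isotopic to $|\alpha|$; applying $f$ to an ambient isotopy between $f^{-1}(|\alpha|)$ and $|\alpha|$ (which $f$ preserves as a homeomorphism), we conclude that $|\alpha|$ is isotopic to $f(|\alpha|)$.

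I do not foresee a real obstacle here; the only point requiring a little care is the bookkeeping between the algebraic action $[\rho] \mapsto [\rho \circ f_\sharp^{-1}]$ and the topological action $\beta \mapsto f^{-1}(\beta)$ on free homotopy classes, which is standard but worth spelling out since the paper has been careful about base points and normalizations. Once that identity is in place, the argument is a one-line application of the strict minimality hypothesis.
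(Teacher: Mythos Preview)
Your proposal is correct and follows essentially the same argument as the paper: both use the identity $l_{[\rho']}(|\alpha|)=l_{[\rho]}(f^{-1}(|\alpha|))$ together with the strict minimality hypothesis to force $f^{-1}(|\alpha|)$, and hence $f(|\alpha|)$, to be isotopic to $|\alpha|$. Your write-up simply makes the equivariance identity and the final isotopy step a bit more explicit than the paper does.
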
 
\begin{proof} 
Since $[\rho']= f_*([\rho]) = [\rho\circ (f_\sharp) ^{-1}]$, we know that $l_{\rho'}(\alpha)=l_\rho (f^{-1}(|\alpha|))$. If $f^{-1}(|\alpha|)$ were not isotopic to $|\alpha|$, we would have $l_\rho(f^{-1}(\alpha))\ne l_{\rho'}(\alpha)$ from the assumption. This is absurd, showing that $|\alpha|$ and $f^{-1}(|\alpha|)$ are isotopic. That is to say, $|\alpha|$ and $f(|\alpha|)$ are isotopic. The same proof works for the maximal representations.
\end{proof}

\begin{lem} \label{lem:disj} 
Let $S$ be a compact orientable surface of negative Euler characteristic possible with boundary. Assume that $S$ is not a sphere with three boundary components. Let $N$ be chosen as in Proposition~\ref{prop:length} (or Proposition~\ref{prop:lengthrel} if $S$ has boundary). Let $\Pi$ be either $\Hit_{\mathsf{G}}(S)\to \mathcal{M}_{\mathsf{G}}(S)$ (or $\Hit_n(S,\mathbf{B}) \to \mathcal{M}_n(S, \mathbf{B})$ respectively) as in Section~\ref{sec:MODaction}. Then there exists $D_N > 0$ with the following properties:
\begin{itemize} 
\item[(i)] $\Pi(N) \cap \Pi( \Phi_t(N))=\emp$ provided $t > D_N$. 
\item[(ii)] $\Pi| \Phi_t(N)$ is injective  for every $t$.
\item[(iii)] $\Pi( \Phi_t(N) )\cap \Pi( \Phi_s(N))=\emp$ provided $|s-t|> D_N$. 
\end{itemize}
The same assertion holds for a closed orientable surface $S$, the open set $N$ from Proposition~\ref{prop:lengthmax} and the projection $\Pi:\Max_{2n}(S)\to\Max_{2n}(S)/\Mod(S)$. 
\end{lem} 
\begin{proof} 
We will study the Hitchin components only. We can prove for the maximal components by replacing $Q_\mathsf{G}$ with $Q_M$. 

(i) Suppose that (i) does not hold. Then there is a divergent sequence $t_1<t_2<\cdots$ of real numbers and mapping class group elements $\phi_j\in \Mod(S)$ such that $(\phi_j)_*\Phi_{t_j}([\rho_j])= [\rho_j']$ for $[\rho_j]$, $[\rho_j']\in N$. 
Let $f_j:S\to S$ be a representative of $\phi_j$. By Proposition~\ref{prop:length} and Lemma~\ref{lem:mono}, for each $j$, we may assume without loss of generality that $f_j(|\alpha|)$ is isotopic to $|\alpha|$. Hence, $f_j$ can be isotoped so that $f_j(\nu(|\alpha|))=\nu(|\alpha|)$ for some tubular neighborhood $\nu(|\alpha|)$ of $|\alpha|$. 

We now claim that each $f_j$ is a power of the Dehn twist along $\alpha$.

First, assume that  $\alpha$ is non-separating. As in Section~\ref{fungpdecomnonsep}, let $S_1=S\setminus\nu(|\alpha|)$ and let $\mathcal{R}_1:\Hit_\mathsf{G}(S,\mathbf{B})\to\Delta_\alpha(\mathbf{B})$ be the restriction map. Let $f^1_j:S_1\to S_1$ be the restricted homeomorphism $f^1_j:=f_j|S_1$. Observe that  
\[
\mathcal{R}_1 ((f_j)_*(\Phi_{t_j} ([\rho_j]))) = (f_j ^1)_* (\mathcal{R}_1(\Phi_{t_j}([\rho_j]))) = \mathcal{R}_1([\rho_j'])\in \mathcal{R}_1(N).
\]
Since 
\[
\mathcal{R}_1(\Phi_{t_j} ([\rho_j])) = \mathcal{R}_1 ([\rho_j])\in \mathcal{R}_1(N)
\]
by (\ref{R1action}), we know that 
\[
(f_j^1)_*(\mathcal{R}_1(N)) \cap \mathcal{R}_1(N) \ne \emptyset.
\]
Thus, $f_j ^1$ is homotopic and hence isotopic \cite[Theorem 6.4]{epstein1966} to the identity on $S_1$ by our choice of $N$. Therefore, each $f_j$ must be a power of the Dehn twist of $S$ along $\alpha$, since the restriction is isotopic to the identity map on $S_1$. 

Suppose now that $\alpha$ is separating. As in Section~\ref{fungpdecomsep}, write $S\setminus\nu(|\alpha|)=S_1\cup S_2$. For each $j$ and $i=1,2$, we may assume without loss of generality that $f_j(|\alpha|)$ is isotopic to $|\alpha|$.  We further claim that $f(S_i)$ is isotopic to $S_i$ for each $i=1,2$. For contradiction, suppose that $f(S_1)$ is isotopic to $S_2$. Consider the simple closed curves $\{|\alpha|=\alpha_1, \alpha_2, \cdots, \alpha_{3g+b-3}\}$ in Proposition~\ref{prop:length}. If $3g+b-3=1$, $S$ must be the sphere with four boundary components. In this case, $f(S_i)$ is isotopic to $S_i$ as $f$ preserves each boundary component. Hence, we may assume that $3g+b-3>1$. Let $\alpha_2\subset S_1$. Since $f^{-1}(\alpha_2)$ is not isotopic to $\alpha_2$, we know that 
\[
|l_{[\rho_2]}(\alpha_2)-l_{[\rho_1]}(f^{-1}(\alpha_2))|>0
\]
by our choice of $N$. However, we have 
\[
l_{[\rho_2]}(\alpha_2)=l_{f_*([\rho_1])}(\alpha_2)=l_{[\rho_1]}(f^{-1}(\alpha_2)),
\]
a contradiction.  Thus, $f(S_i)$ is isotopic to $S_i$.  Now we may isotope  $f_j$  so that $f_j(\nu(|\alpha|))=\nu(|\alpha|)$ for some tubular neighborhood $\nu(|\alpha|)$ and  $f_j(S_i) = S_i$, for each $i=1,2$. Let $f_j ^i:=f_j|S_i: S_i \ra S_i$ denote the restricted homeomorphisms.

Observe that  
\[
\mathcal{R}_1 ((f_j)_*(\Phi_{t_j} ([\rho_j]))) = (f_j ^1)_* (\mathcal{R}_1(\Phi_{t_j}([\rho_j]))) = \mathcal{R}_1([\rho_j'])\in \mathcal{R}_1(N).
\]
Since 
\[
\mathcal{R}_1(\Phi_{t_j} ([\rho_j])) = \mathcal{R}_1 ([\rho_j])\in \mathcal{R}_1(N)
\]
by (\ref{R1actionsep}), we know that 
\[
(f_j^1)_*(\mathcal{R}_1(N)) \cap \mathcal{R}_1(N) \ne \emptyset.
\]
Thus, $f_j ^1$ are homotopic and hence isotopic \cite[Theorem 6.4]{epstein1966} to the identity on $S_1$ by our choice of $N$. Similarly, we conclude that $f_j ^2$ are isotopic to the identity on $S_2$. This shows that each $f_j$ is a power of the Dehn twist of $S$ along $\alpha$.

Hence, either $\alpha$ is separating or non-separating, each $f_j$ must be a power of the Dehn twist of $S$ along $\alpha$.  It follows that  $(\phi_j)_*=\Phi^{m_jT,\alpha}$ for some $m_j\in \mathbb{Z}$. We refer to Section~\ref{sec:bending} for the definition of the function $T$.

Therefore, we have 
\[[\rho_j']=\Phi^{m_j T([\rho_j]),\alpha}(\Phi_{t_j}([\rho_j])). \]
Here we used the fact that $T(\Phi_{t_j}([\rho_j])) = T([\rho_j])$ as $\Phi_t$ preserves the conjugacy class of $\rho_j (\alpha_{x_0})$.

Recall that $\Phi_{t_j}= \Phi^{t_j Q_{\mathsf{G}},\alpha}$, where $Q_{\mathsf{G}}$ lies in the boundary of the positive Weyl chamber. Thus, we have 
$\Phi_{t_j}\Phi^{m_j T([\rho_j]),\alpha} = \Phi^{ Q'_j,\alpha}$ where
\[
Q'_j:=t_j Q_{\mathsf{G}}-m_j\lambda(\rho_j(\alpha)).
\]
We know that $\rho_j$ are $\Theta$-Anosov \cite[Theorem B]{guichard2021}. Therefore, all $\lambda(\rho_j(\alpha))$ stay in the interior of the positive Weyl chamber. Because $Q_{\mathsf{G}}$ lies on the boundary of the positive Weyl chamber, we know that  $\lambda(\rho_j(\alpha))$  are not constant multiples of $Q_{\mathsf{G}}$. Since $[\rho_j]$ are situated in the compact set $N$, there is a lower bound $0< \varphi_N< \pi$ on the angles between $Q_{\mathsf{G}}$ and $\lambda(\rho_j(\alpha))$. We estimate 
\[
\|Q_j '\|=||t_j Q_{\mathsf{G}} - m_j \lambda(\rho_j(\alpha))|| \geq |t_j|\cdot ||Q_{\mathsf{G}}|| \sin(\varphi_N)
\]
for all $j$ by projecting $Q_{\mathsf{G}}$ to the orthogonal hyperplane to $\lambda(\rho_j(\alpha))$. 

We now have that $\Phi^{Q'_j, \alpha}(N)\cap N \ne \emptyset$ for all $j$ and that $\|Q_j'\|\to \infty$ as $j\to \infty$ since $|t_j|\to \infty$ as $j\to \infty$. However, this is not possible by Proposition~\ref{prop:proper}. Hence, there exists $D_N$ with desired properties.

(ii) Suppose that $\Pi| \Phi_t(N)$ is not injective. Then there is $\phi\in \Mod(S)$ such that $\phi_*[\rho_1] =[\rho_2]$ for $[\rho_1], [\rho_2]\in \Phi_t(N)$.  As above, we choose  a representative $f$ of $\phi$ such that $f(|\alpha|) = |\alpha|$. As observed in the proof of (i),  $f$ preserves the orientation of $\alpha$.

Now, one may apply Lemma~\ref{lem:commute} to compute
\[ \phi_\ast(\Phi_{-t}([\rho_1])) = \Phi_{ -t}\phi_\ast([\rho_1]) = \Phi_{-t}([\rho_2]). \]
This implies that $\Pi$ is not injective on $N$ violating our choice of $N$. This proves (ii).

(iii) Let us choose $D_N$ as in (i). Then there are 
$[\rho] \in \Phi_t(N)$ and $[\rho']\in \Phi_s(N)$
where $f_\ast([\rho]) = [\rho']$ for a homeomorphism $f:S\to S$. Assume without loss of generality that $s> t$. 

As above,   $f$ can be isotoped so that  $f(|\alpha|)=|\alpha|$. Also we know that $f$ preserves the orientation of $\alpha$. By Lemma \ref{lem:commute}, we obtain 
\[ f_\ast( \Phi_{-t}([\rho]) )= \Phi_{- t}( f_\ast([\rho])) = \Phi_{- t}([\rho']) \in \Phi_{s- t}(N). \]
Since $\Phi_{-t}([\rho]) \in N$, and  $s-t > D_N$,  (iii) follows from (i). 
\end{proof}

\begin{proof}[Proof of Theorems~\ref{thm:main}, \ref{thm:maximal}] As the maximal case, Theorem~\ref{thm:maximal}, can be shown by the same argument, we only present the proof for the Hitchin case. 

    Let $S$ be a closed orientable surface with genus $g>1$. Let $N$ be chosen as in Proposition~\ref{prop:length} (or Proposition~\ref{prop:lengthmax} for the maximal case) and let $D_N$ be the constant from Lemma~\ref{lem:disj}. We choose a sequence $0<t_1<t_2<\cdots$ of reals so that $t_{i+1} - t_i > D_N$. Lemma~\ref{lem:disj} shows that $\Pi(\Phi_{t_i}(N))$ are open and pairwise  disjoint. Since $\Phi_t$ is $\vol_n$-preserving, $\Pi(\Phi_{t_1}(N)), \Pi(\Phi_{t_2}(N)),$ and so on have identical positive volume in $\mathcal{M}_\mathsf{G}(S)$. This proves  Theorems~\ref{thm:main} and \ref{thm:maximal}.
\end{proof}

\begin{proof}[Proof of Theorems~\ref{thm:main2}]
Now assume that $S$ is compact orientable with negative Euler characteristics.  If $S$ is not a sphere with three boundary components, we just apply the previous proof with the choice of open set $N$ from Proposition~\ref{prop:lengthrel}. 

    To complete the proof, assume that $S$ is a sphere with three boundary components. We need the following lemma which relates $\omega_{ABG}$ and $\omega_{GHJW}$.
\begin{lem}\label{ABGandGHJW}
    Suppose that a closed orientable surface $S$ of genus $>1$ is decomposed along a set of pairwise disjoint simple closed curves $\xi_1, \cdots, \xi_k$ into two compact subsurfaces $S_1$ and $S_2$ of negative Euler characteristics.  Let $\mathbf{B}$ be a choice of boundary holonomy for $S_1$. Let $I: \Hit_n(S) \to \Hit_n(S_1, \mathbf{B})$ be the natural map induced from the inclusion $\pi_1(S_1) \to \pi_1(S)$. Then for any vector fields $X,Y$ tangent to a submanifold of the form
    \[
    \{[\rho]\in \Hit_n(S)\,|\,[\rho|\pi_1(S_2)]=[\rho_2]\},
    \]
    for some chosen $[\rho_2]\in \Hit_n(S_2)$, we have $\omega_{ABG}(X,Y) = \omega_{GHJW}(dI( X), dI( Y))$.
\end{lem}
\begin{proof}
    This is a special case of Theorem~4.5.7 in \cite{CJK20}.
\end{proof}

When $S$ is a sphere with three boundary components, we observe that $\Mod(S)$ is trivial. Therefore, we have $\mathcal{M}_n(S,\mathbf{B}) = \Hit_n(S, \mathbf{B})$. 

Consider the double $\widehat{S}$ of $S$. We fix the pants-decomposition $\mathcal{P}$ that contains $S$ as one of components. By Lemma~8.6 of \cite{SWZ2020}, eruption flows and hexagonal flows on $\Hit_n(\widehat{S})$ with respect to $\mathcal{P}$ are tangent to the submanifold in Lemma~\ref{ABGandGHJW} and form a complete system of commuting Hamiltonian flows. Hence, by the Lemma~\ref{ABGandGHJW}, their Hamiltonian functions give rise to unbounded global Darboux coordinates for $(\Hit_n(S,\mathbf{B}),\omega_{GHJW})$. This completes the proof for Theorem~\ref{thm:main2}.
\end{proof}

\section{Questions}\label{sec:question}
Our technique has room for further generalizations. In this section, we discuss to what extent our proof can be applied.

\subsection{Generalizing Theorems~\ref{thm:main} and \ref{thm:main2}}
There are some cases leftover in Theorems~\ref{thm:main} and \ref{thm:main2}. We can perhaps propose the following bold question. Given a semi-simple Lie group $\mathsf{G}$ and a closed orientable surface $S$, a \emph{higher Teichm\"uller space} $\mathfrak{T}(S,\mathsf{G})$ is a connected component of the character variety $\operatorname{Hom}(\pi_1(S), \mathsf{G})/\mathsf{G}$ consisting solely of discrete and faithful representations. Hitchin components and the space of maximal representations are the most well-known examples. However, higher Teichm\"uller spaces do not always exist. In fact, it is recently shown \cite{bradlow2024,guichard2021} that the existence of so-called a $\Theta$-positive structure of $\mathsf{G}$ guarantees the existence of a higher Teichm\"uller space. 

Given a Lie group $\mathsf{G}$ with a $\Theta$-positive structure, the higher Teichm\"uller space $\mathfrak{T}(S,\mathsf{G})$ is precisely the space of $\Theta$-positive representations. Since these representations are $\Theta$-Anosov \cite{guichard2021}, one may consider the moduli space $\mathcal{M}_{\mathsf{G},\Theta}(S) = \mathfrak{T}(S,\mathsf{G}) / \Mod(S)$ and ask whether it has infinite total Atiyah-Bott-Goldman volume. 

\begin{question}\label{qu:higherteichmuller}
Suppose that the real rank of $\mathsf{G}$ is bigger than 1. Does $\mathcal{M}_{\mathsf{G},\Theta} (S)$ have infinite total Atiyah-Bott-Goldman volume?
\end{question}

Given the generality of Goldman flow, we speculate that the answer to Question~\ref{qu:higherteichmuller} should be yes.

\subsection{``Thick part'' of the Hitchin-Riemann moduli spaces} 
For a given positive real number $\epsilon>0$, define 
\[
    \operatorname{Hit}_{\mathsf{G}} ^\epsilon(S) = \{[\rho]\in \operatorname{Hit}_{\mathsf{G}}(S)\,|\,l_{\rho}(\gamma)<\epsilon\,\text{ for some essential simple closed curve }\gamma\}
\]
and
    \[
    \mathcal{M}_{\mathsf{G}} ^\epsilon(S) = \operatorname{Hit}_{\mathsf{G}} ^\epsilon (S) / \operatorname{Mod}(S).
    \]
We may call $ \mathcal{M}_{\mathsf{G}} ^\epsilon(S)$ the $\epsilon$\emph{-thin part} of the Hitchin-Riemann moduli space $\mathcal{M}_{\mathsf{G}}(S)$.

Our main theorem actually shows that, for  $\epsilon=D_\mathsf{G}$, the volume of $\mathcal{M}^\epsilon _{\mathsf{G}}(S)$ is infinite if the real rank of $\mathsf{G}>1$. One may ask whether the volumes of $\epsilon$-thick parts are infinite as well.
\begin{question} \label{thickpart}
        Suppose that the real rank of $\mathsf{G}$ is at least 2. For any $\epsilon>0$, does the $\epsilon$-thick part $\mathcal{M}_{\mathsf{G}}(S) \setminus \mathcal{M}_n^\epsilon(S) $  also have infinite volume?
\end{question}

A contrasting classical result is Mumford's compactness theorem, stating that $\mathcal{M}_2(S) \setminus \mathcal{M}_2^\epsilon(S) $  is compact for any $\epsilon$. 

Our present tool no longer works on $\epsilon$-thick parts. However, we are currently working on Question~\ref{thickpart} for the $n=3$ case by using a slightly improved technique and strongly believe that we proved this.

\bibliography{aff3bib0311.bib} 
\bibliographystyle{amsplain}

\end{document}